\newtheorem{lemma}{Lemma}[section]
\newtheorem{prop}[lemma]{Proposition}
\newtheorem{cor}[lemma]{Corollary}
\newtheorem{thm}[lemma]{Theorem}
\newtheorem{example}[lemma]{Example}
\newtheorem{thm?}[lemma]{Theorem?}
\newtheorem{ques}[lemma]{Question}
\newtheorem{fact}{Fact}
\begin{document}
\title{The Euclidean Criterion for Irreducibles}


\author{Pete L. Clark}

\date{\today}



%

\newcommand{\etalchar}[1]{$^{#1}$}
\newcommand{\F}{\mathbb{F}}
\newcommand{\et}{\textrm{\'et}}
\newcommand{\ra}{\ensuremath{\rightarrow}}
\newcommand{\FF}{\F}
\newcommand{\Z}{\mathbb{Z}}
\newcommand{\N}{\mathcal{N}}
\newcommand{\ch}{}
\newcommand{\R}{\mathbb{R}}
\newcommand{\PP}{\mathbb{P}}
\newcommand{\pp}{\mathfrak{p}}
\newcommand{\C}{\mathbb{C}}
\newcommand{\Q}{\mathbb{Q}}
\newcommand{\tpqr}{\widetilde{\triangle(p,q,r)}}
\newcommand{\ab}{\operatorname{ab}}
\newcommand{\Aut}{\operatorname{Aut}}
\newcommand{\gk}{\mathfrak{g}_K}
\newcommand{\gq}{\mathfrak{g}_{\Q}}
\newcommand{\OQ}{\overline{\Q}}
\newcommand{\Out}{\operatorname{Out}}
\newcommand{\End}{\operatorname{End}}
\newcommand{\Gon}{\operatorname{Gon}}
\newcommand{\Gal}{\operatorname{Gal}}
\newcommand{\CT}{(\mathcal{C},\mathcal{T})}
\newcommand{\ttop}{\operatorname{top}}
\newcommand{\lcm}{\operatorname{lcm}}
\newcommand{\Div}{\operatorname{Div}}
\newcommand{\OO}{\mathcal{O}}
\newcommand{\rank}{\operatorname{rank}}
\newcommand{\tors}{\operatorname{tors}}
\newcommand{\IM}{\operatorname{IM}}
\newcommand{\CM}{\operatorname{CM}}
\newcommand{\Frac}{\operatorname{Frac}}
\newcommand{\Pic}{\operatorname{Pic}}
\newcommand{\coker}{\operatorname{coker}}
\newcommand{\Cl}{\operatorname{Cl}}
\newcommand{\loc}{\operatorname{loc}}
\newcommand{\GL}{\operatorname{GL}}
\newcommand{\PSL}{\operatorname{PSL}}
\newcommand{\Frob}{\operatorname{Frob}}
\newcommand{\Hom}{\operatorname{Hom}}
\newcommand{\Coker}{\operatorname{\coker}}
\newcommand{\Ker}{\ker}
\renewcommand{\gg}{\mathfrak{g}}
\newcommand{\sep}{\operatorname{sep}}
\newcommand{\new}{\operatorname{new}}
\newcommand{\Ok}{\mathcal{O}_K}
\newcommand{\ord}{\operatorname{ord}}
\newcommand{\mm}{\mathfrak{m}}
\newcommand{\Ohell}{\OO_{p^{\infty}}}
\newcommand{\ff}{\mathfrak{f}}
\renewcommand{\N}{\mathbb{N}}
\newcommand{\Gm}{\mathbb{G}_m}
\newcommand{\MaxSpec}{\operatorname{MaxSpec}}
\newcommand{\qq}{\mathfrak{q}}
\newcommand{\cof}{\operatorname{cf}}
\newcommand{\lub}{\operatorname{lub}}
\newcommand{\glb}{\operatorname{glb}}
\newcommand{\Hol}{\operatorname{Hol}}

\begin{abstract}
We recast Euclid's proof of the infinitude of prime numbers as a \textbf{Euclidean Criterion} for a domain to have infinitely many atoms.  We make connections with Furstenberg's \emph{``topological'' proof} of the infinitude of prime numbers and show that our criterion applies even in certain domains in which not all nonzero nonunits factor into products of irreducibles.
\end{abstract}

\maketitle


\noindent

\section{Introduction}
\noindent
This article has its genesis in a graduate \emph{VIGRE research group} taught by Paul Pollack and me in Fall 2015: \emph{Introduction to the Process of Mathematical Research}.  Rather than concentrating on a fixed topic preselected by us, the goal was to guide students through the process of selecting and performing research on their own.  One technique we tried to inculcate is exploitation of the many-to-one relation between theorems and proofs.  A good theorem has several proofs, and you will know two proofs are different when can be used to prove further theorems the other cannot.  \\ \indent In our first meeting, Pollack and I presented seven proofs of Euclid's Proposition IX.20: there are infinitely many prime numbers.  My first proof: suppose given a domain $R$ that is not a field, in which each nonzero nonunit factors into irreducibles and whenever $x \in R$ is a nonzero nonunit then $x+1$ is not a unit; then there is at least one irreducible element $f_1$, and given irreducibles $f_1,\ldots,f_n$, by factoring $f_1 \cdots f_n + 1$ we get a new irreducible element.  It was pointed out that this argument, though correct, does not imply Euclid's result: $x = -2$ is a problem.  
Some salvages were suggested: in $\Z$ it is enough to replace $f_1 \cdots f_n$ by $- f_1 \cdots f_n$, if necessary.
\\ \indent
Here we present a general fix  -- a \textbf{Euclidean Criterion} for a domain to have infinitely many nonassociate irreducibles -- and explore its consequences.  We soon find ourselves on a scenic tour of 20th century mathematics, as we engage with work of Jacobson, Furstenberg, Cohen-Kaplansky and Anderson-Mott, among others.

\subsection{Acknowledgments}
\textbf{} \\ \\ \noindent
Thanks to all members of the 2015-2016 Introduction to Mathematical Research UGA VIGRE group.  Conversations with Saurabh Gosavi, Noah Lebowitz-Lockard, Robert Samalis, Lee Troupe and Lori D. Watson were helpful.  \\ \indent
My group coleader Paul Pollack made key contributions: first, he emphasized that the Euclidean Criterion automatically yields pairwise comaximality.  Second, Theorem \ref{POLLACKTHM} was inspired by \cite[Thm. 1.16]{Pollack}, and though I came up with the statement, I could prove it only in various special cases.  The proof included here is his. \\ \indent
I am grateful to two anonymous referees for their careful, detail-oriented reports.  In particular, Example \ref{EXAMPLE4.19} was 
suggested by the ``first'' referee.

\section{The Euclidean Criterion}

\subsection{A primer on factorization in domains}
\textbf{} \\ \\
By a \textbf{ring} we will mean a commutative ring with a multiplicative identity.  We denote the set of nonzero elements of $R$ by $R^{\bullet}$.  An element $x \in R$ is a \textbf{unit} if there is $y \in R$ such that $xy = 1$.  We denote the group of units of $R$ by $R^{\times}$.  For a subset $S$ of a ring $R$, we denote by $(S)$ the ideal of $R$ generated by $S$.  (As is standard, we write $(x_1,\ldots,x_n)$ for $(\{x_1,\ldots,x_n\})$. Ideals $I$ and $J$ in $R$ are \textbf{comaximal} if $I+J = R$.  Elements $a,b \in R$ are comaximal if $(a)$ and $(b)$ are comaximal: $(a,b) = R$.  An indexed family of ideals $\{I_i\}$ is \textbf{pairwise comaximal} if $I_i + I_j = R$ for all $i \neq j$, and similarly for pairwise comaximal elements.
\\ \indent
A \textbf{domain} is a nonzero ring in which $x,y \neq 0 \implies xy \neq 0$. For $x,y \in R$ we say \textbf{x divides y} and write $x \mid y$ if there is $c \in R$ such that $cx = y$.  Elements $x$ and $y$ are \textbf{associates} if $y = ux$ for some $u \in R^{\times}$.  An element $x$ of a domain is \textbf{irreducible} if it is a nonzero nonunit and $x = yz$ implies $y \in R^{\times}$ or $z \in R^{\times}$.  A \textbf{prime element} $p \in R$ is an element $p \in R^{\bullet}$ for which $(p)$ is a prime ideal.  Thus a nonzero nonunit $p$ is prime if and only if $p \mid ab \implies p \mid a$ or $p \mid b$. 
\\ \indent
An \textbf{atom} in a domain $R$ is a principal ideal $(x)$ generated by an irreducible element $x$.  Thus two irreducibles of a domain $R$ determine the same atom if and only if they are associate.  (It is more common in the literature for the terms ``atom'' and 
``irreducible'' to be fully synonymous, but this minor distinction is convenient for our purposes: usually we will count to count irreducibles in a domain \emph{up to associates}, but sometimes we will want to count irreducibles.) 
A \textbf{Furstenberg domain} is a domain $R$ in which every nonzero nonunit has an irreducible divisor.\footnote{The explanation for the terminology comes in $\S$3.1.} An \textbf{atomic domain} is a domain $R$ in which for every nonzero nonunit 
$x \in R$ there are irreducible elements $f_1,\ldots,f_n$ such that $x = f_1 \cdots f_n$.  A \textbf{unique factorization domain (UFD)} is an atomic domain such that if $f_1,\ldots,f_m,g_1,\ldots,g_n$ are irreducibles such that $f_1 \cdots f_m = g_1 \cdots g_n$, then
 $m = n$ and there is a bijection $\sigma: \{1,\ldots,m\} \ra \{1,\ldots,n\}$ such that $(f_i) = (g_{\sigma_i})$ for all $1 \leq i \leq m$.  
\\ \indent
Prime elements are irreducible.  In general the converse is false!  An atomic domain is a UFD iff every irreducible is prime 
\cite[Thm. 15.8]{Clark-CA}. The terminology can be confusing in light of the definition of a prime number $p$ as a positive integer not divisible by any $1 < n < p$: this means $p$ is irreducible in $\Z$.  But Euclid showed
\[ p \mid ab \implies p \mid a \text{ or } p \mid b. \]
From this one can easily show the Fundamental Theorem of Arithmetic: $\Z$ is a UFD.  
A \textbf{principal ideal domain (PID)} is a domain in which each ideal is generated by a single element.  Every PID is a UFD.  It follows from the Euclidean algorithm that  $\Z$ is a PID.   A \textbf{B\'ezout domain} is a domain in which every finitely 
generated ideal is principal. 
A ring is \textbf{Noetherian} if all of its ideals are finitely 
generated.  Noetherian domains are atomic \cite[Prop. 15.3]{Clark-CA}.  Thus a PID is precisely a Noetherian B\'ezout domain.  A \textbf{Dedekind domain} is a domain in which each nonzero proper ideal factors uniquely into prime ideals.  A domain is Dedekind iff it is Noetherian, \textbf{of dimension at most one} -- every nonzero prime ideal is maximal -- and \textbf{integrally closed} -- every element of the fraction field which satisfies a monic polynomial with coefficients in $R$ lies in $R$ \cite[Thm. 20.10]{Clark-CA}.
\\ \indent
Working in a \emph{domain} rather than a general ring confers certain advantages:

\begin{fact}
\label{FACT}
a) Every nonzero ideal in a ring contains a nonzero principal ideal. \\
b) If $R$ is a domain and $\alpha \in R^{\bullet}$, $x \in R \mapsto \alpha x$ gives a bijection from $R$ to $(\alpha)$. \\
c) Thus for every nonzero ideal $I$ of a domain $R$ we have $\# I = \# R$. \\
d) For nonzero ideals $I$ and $J$ of $R$, $I \cap J$ contains $IJ$ and thus is nonzero.
\end{fact}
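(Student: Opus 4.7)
The plan is to treat each of the four parts separately, in order, since (c) relies on (a) and (b), and (d) stands on its own but uses the domain hypothesis.

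For (a), I would take an arbitrary nonzero ideal $I$ and pick any $\alpha \in I$ with $\alpha \neq 0$. Since $I$ is closed under multiplication by elements of $R$, we have $(\alpha) = \{r\alpha : r \in R\} \subseteq I$, and $(\alpha)$ is nonzero because it contains $\alpha$ itself. No domain hypothesis is needed here.

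For (b), surjectivity of the map $\mu_\alpha: x \mapsto \alpha x$ onto $(\alpha)$ is built into the definition of $(\alpha)$. For injectivity, if $\alpha x = \alpha y$, then $\alpha(x-y) = 0$; since $\alpha \neq 0$ and $R$ is a domain, $x - y = 0$. This is the only place the domain hypothesis is essential in (b). Part (c) then follows formally: by (a), $I \supseteq (\alpha)$ for some $\alpha \in R^\bullet$, and by (b), $\#(\alpha) = \#R$; combined with the trivial $\#I \leq \#R$, we conclude $\#I = \#R$.

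For (d), I would verify the two containments $IJ \subseteq I$ and $IJ \subseteq J$ directly from the definition of $IJ$ as the ideal generated by products $ab$ with $a \in I$, $b \in J$: each such generator lies in both $I$ and $J$ (since $I$ and $J$ are ideals), hence $IJ \subseteq I \cap J$. To see $IJ \neq 0$, pick nonzero elements $a \in I$ and $b \in J$; their product $ab$ lies in $IJ$ and is nonzero because $R$ is a domain.

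Honestly, there is no real obstacle here: the statement is a packaging of standard facts, and the only subtle point worth flagging is that (b), (c), and the nonvanishing half of (d) all fail without the domain hypothesis (e.g.\ in $\Z/6\Z$, the ideal $(2)$ has three elements while the ring has six, and $(2)(3) = 0$ though $(2) \cap (3) = (0) = 0$ as well but $(2), (3)$ are nonzero). I would mention this briefly to motivate why the four parts are grouped as advantages of working in a domain.
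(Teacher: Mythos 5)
Your proposal is correct; the paper states this Fact without proof, treating all four parts as immediate, and your arguments are exactly the standard ones the author intends (nonzero $\alpha \in I$ gives $(\alpha) \subseteq I$, injectivity of $x \mapsto \alpha x$ from the domain hypothesis, and $0 \neq ab \in IJ \subseteq I \cap J$). Your side remark about $\Z/6\Z$ is a nice touch and consistent with the paper's framing of the Fact as listing advantages of working in a domain.
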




\subsection{The Euclidean Criterion}
\textbf{} \\ \\ \noindent
A ring $R$ satisfies \textbf{Condition (E)} if for all $x \in R^{\bullet}$, there is $y \in R$ such that $yx + 1 \notin R^{\times}$.  In other words, if $x \neq 0$ then $1+(x) \not \subset R^{\times}$.  By Fact \ref{FACT}a) this is equivalent to: if $I$ is a nonzero ideal of $R$ then $1 + I \not \subset R^{\times}$, though we will defer consideration of this restatement until later on.

\begin{example}
\label{1.1} \textbf{} \\
a) The ring $\Z$ satisfies Condition (E).  Indeed, $\Z^{\times} = \{\pm 1 \}$, so for $x \in \Z^{\bullet}$,
take $y = 1$ if $x$ is positive and $y = -1$ if $x$ is negative); then $yx \geq 1$ so $yx + 1 \geq 2$.  \\
b) For any domain $R$, the polynomial ring $R[t]$ satisfies Condition (E).  Indeed, $(R[t])^{\times} = R^{\times}$, so for any $x \in R[t]^{\bullet}$, take $y = t$.  \\
c) $R = \Z[i]$ satisfies Condition (E).  Indeed $\Z[i]^{\times} = \{1,i,-1,-i\}$, so this is geometrically clear: for any $x \in \Z[i]^{\bullet}$, if we multiply it by a $y$ with large enough $|y|$, then $yx$ will be much more than $1$ unit away from any point on the unit circle.
\end{example}

\begin{prop}
\label{1.4}
A domain $R$ with $\# R > \# R^{\times}$ satisfies Condition (E).
\end{prop}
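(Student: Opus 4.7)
The plan is to argue by a direct cardinality comparison, using Fact \ref{FACT} to control the size of $1 + (x)$. Fix any $x \in R^\bullet$. By Fact \ref{FACT}b), the map $r \mapsto xr$ is a bijection $R \to (x)$, and then $a \mapsto 1 + a$ is evidently a bijection $(x) \to 1 + (x)$. Composing, we conclude $\#\bigl(1 + (x)\bigr) = \#R$.

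Now suppose, toward a contradiction, that Condition (E) fails for $x$: then $1 + (x) \subset R^\times$, so
\[
\#R \;=\; \#\bigl(1 + (x)\bigr) \;\leq\; \#R^\times,
\]
contradicting the standing hypothesis $\#R > \#R^\times$. Hence there exists $y \in R$ with $yx + 1 \notin R^\times$, and Condition (E) holds.

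The only potentially subtle point is making sure the first bijection really requires the domain hypothesis: the injectivity of multiplication by $x \in R^\bullet$ uses exactly that $R$ has no nonzero zero-divisors, which is recorded in Fact \ref{FACT}b). Other than invoking that fact, there is no real obstacle -- the argument is a one-line counting proof.
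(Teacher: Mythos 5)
Your argument is correct and is essentially the paper's own proof: the composite of your two bijections is exactly the injection $y \mapsto yx+1$ that the paper uses, with injectivity resting on the domain hypothesis in the same way. The only cosmetic difference is that you phrase the cardinality comparison as a contradiction rather than directly observing that the image is too large to sit inside $R^{\times}$.
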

\begin{proof}
For $x \in R^{\bullet}$, the map $\iota: R \ra R$ given by
$y \mapsto yx + 1$ is an injection.
  Thus $\# \iota(R) = \# R > \# R^{\times}$, so it cannot be that $\iota(R) \subset R^{\times}$.
\end{proof}
\noindent
And here we go:

\begin{thm}(The Euclidean Criterion) 
\label{BIGONE}
\\
Let $R$ be a domain, not a field, satisfying Condition (E).  \\
a) There is an infinite sequence $\{a_n\}_{n=1}^{\infty}$ of pairwise comaximal nonunits.  \\
b) If $R$ is also Furstenberg, it admits an infinite sequence $\{f_n\}_{n=1}^{\infty}$ of pairwise comaximal irreducibles.  Thus $\{ (f_n)\}_{n=1}^{\infty}$ is a sequence of 
distinct atoms in $R$.
\end{thm}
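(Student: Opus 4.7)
The plan is to mimic Euclid's inductive construction: build the sequence $\{a_n\}$ one term at a time, using Condition (E) at each stage to produce the next term from the product of the previous ones, in such a way that an explicit B\'ezout identity delivers pairwise comaximality automatically.

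For part (a), I would start by using the hypothesis that $R$ is not a field to pick any nonzero nonunit $a_1$. Then, inductively, given pairwise comaximal nonzero nonunits $a_1,\ldots,a_n$, set $x := a_1 \cdots a_n$. Since $R$ is a domain and each $a_i \ne 0$, we have $x \in R^{\bullet}$. Apply Condition (E) to obtain $y \in R$ with $a_{n+1} := yx + 1 \notin R^{\times}$. Two things need checking. First, $a_{n+1} \ne 0$: otherwise $yx = -1$, making $x$ a unit, which contradicts the fact that a product of nonunits in a domain is a nonunit. Second, $a_{n+1}$ is comaximal with each $a_i$ for $i \le n$: the identity
\[
1 \;=\; a_{n+1} \;-\; y\,(a_1 \cdots a_n)
\]
places $1$ in $(a_{n+1}, a_i)$ for every $i \le n$, since $a_i$ divides $a_1\cdots a_n$. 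This preserves the inductive hypothesis and gives the sequence.

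For part (b), invoke the Furstenberg hypothesis to choose, for each $n$, an irreducible divisor $f_n$ of $a_n$. Divisibility $f_n \mid a_n$ gives $(a_n) \subseteq (f_n)$, so for $i \ne j$,
\[
(f_i, f_j) \;\supseteq\; (a_i, a_j) \;=\; R,
\]
proving pairwise comaximality of the $f_n$. Finally, the atoms $(f_n)$ are pairwise distinct: if $(f_i)=(f_j)$ with $i\ne j$, then pairwise comaximality forces $(f_i) = (f_i) + (f_j) = R$, contradicting that $f_i$ is a nonunit.

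There is no serious obstacle here, just one minor vigilance point: one must verify that the newly produced $a_{n+1}$ is genuinely nonzero (not merely a nonunit) before feeding it back into the induction, since the comaximality framework collapses when $0$ is allowed into the sequence. The whole argument is really a clean packaging of Euclid's trick, with Condition (E) playing the role of the ``$+1$ lands outside the units'' step and the Furstenberg hypothesis bridging from arbitrary nonunits to irreducibles.
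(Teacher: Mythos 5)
Your proof is correct. Part (a) is essentially the paper's argument verbatim: choose $a_1$ a nonzero nonunit, apply Condition (E) to the product $a_1\cdots a_n$, and read off comaximality from the B\'ezout identity $1 = a_{n+1} - y\,a_1\cdots a_n$. Your explicit check that $a_{n+1}\neq 0$ is a worthwhile point of care (the paper only makes the analogous remark in part (b)).

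For part (b) you take a slightly different route. The paper re-runs the induction at the level of irreducibles: it starts from an irreducible $f_1$, forms $x = y f_1\cdots f_n + 1$, extracts an irreducible factor $f_{n+1}$ of $x$, and then verifies comaximality by writing out $1 = (x/f_{n+1})f_{n+1} - \bigl(y\prod_{j\neq i} f_j\bigr)f_i$. You instead derive (b) as a corollary of (a): take the sequence $\{a_n\}$ already constructed, choose an irreducible divisor $f_n$ of each $a_n$ (possible since each $a_n$ is a nonzero nonunit and $R$ is Furstenberg), and observe that $(f_i,f_j) \supseteq (a_i,a_j) = R$ because divisors generate larger principal ideals. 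This is the same underlying mechanism -- comaximality passes to divisors -- but your packaging is cleaner in that (b) requires no second induction, while the paper's version has the mild advantage of exhibiting the B\'ezout coefficients explicitly. Your argument that the atoms $(f_n)$ are pairwise distinct matches the paper's.
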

\begin{proof}
a) By induction on $n$.  Let $a_1 \in R$ be a nonzero nonunit. Having chosen $a_1,\ldots,a_n$ pairwise comaximal, by Condition (E) there is $y \in R$ such that $a_{n+1} := y a_1 \cdots a_n + 1 \notin R^{\times}$.  Clearly $(a_i, a_{n+1}) = R$ for all $1 \leq i \leq n$. \\
b) By induction on $n$.  Since $R$ is Furstenberg and not a field, it has an irreducible $f_1$.  Having chosen pairwise comaximal irreducibles $f_1,\ldots,f_n$, by Condition (E) there is $y \in R$ such that $x = y f_1 \cdots f_n + 1$ is a nonzero (since $f_1 \notin R^{\times}$)  nonunit, so $x$ has an irreducible factor $f_{n+1}$.  For all $1 \leq i \leq n$ we have \[1 = (x/f_{n+1}) f_{n+1} -(y \prod_{j \neq i} f_j)f_i, \] so $f_i,f_{n+1}$ are comaximal.  Finally, if $x$ and $y$ are pairwise comaximal irreducibles, then $(x),(y) \subsetneq R$ 
and $(x) + (y) = (x,y) = R$, so we must have $(x) \neq (y)$.
\end{proof}
\noindent
Here are two applications of the Euclidean Criterion.  The first two are immediate.

\begin{thm}
\label{2.1}
a) For any domain $R$, $R[t]$ has infinitely many atoms.  \\
b) In particular, let $D$ be a UFD and let $R = D[t_1,\ldots,t_n]$.  Then $R$ is a UFD satisfying Condition (E), so $R$ has infinitely many nonassociate prime elements.  \\
c) The Gaussian integers $\Z[i]$ have infinitely many atoms.  Since $\Z[i]$ is a PID, there are infinitely many nonassociate prime elements.
\end{thm}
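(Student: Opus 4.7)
My plan is to apply the Euclidean Criterion (Theorem \ref{BIGONE}) to each of the three rings. For parts (b) and (c) the application is painless: iterated Gauss's lemma shows $D[t_1,\ldots,t_n]$ is a UFD when $D$ is, hence atomic and \emph{a fortiori} Furstenberg; Condition (E) is provided by Example \ref{1.1}(b) (taking $y = t_n$); so Theorem \ref{BIGONE}(b) yields infinitely many pairwise comaximal irreducibles, which in a UFD are prime. Likewise, $\Z[i]$ is a Euclidean domain under $z \mapsto |z|^2$, hence a PID, UFD, and Furstenberg; Condition (E) holds by Example \ref{1.1}(c); and again irreducibles of a UFD are prime.

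Part (a) is more delicate, because $R[t]$ need not be Furstenberg when $R$ is not: any nonzero nonunit constant of $R$ with no irreducible divisor in $R$ also has no irreducible divisor in $R[t]$, since a divisor in $R[t]$ of a degree-zero element is itself degree zero. So Theorem \ref{BIGONE}(b) does not apply off the shelf. My plan is to repeat the inductive construction in the proof of Theorem \ref{BIGONE}(b), starting with $f_1 = t$ and taking $y = t$ at every stage via Example \ref{1.1}(b). The critical observation is that $x := t \cdot f_1 \cdots f_n + 1$ has constant term exactly $1$, so every degree-zero divisor of $x$ in $R[t]$ is a unit. A short strong induction on $\deg x$ then shows $x$ has an irreducible factor in $R[t]$: if $x = gh$ with $g,h$ non-units, their constant terms multiply to $1$, forcing both constant terms to be units and both factors to have positive degree strictly less than $\deg x$, each inheriting the unit-constant-term hypothesis; the base case $\deg x = 1$ then forces irreducibility outright. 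Choose $f_{n+1}$ to be such an irreducible factor; the identity $f_{n+1}(x/f_{n+1}) - t (\prod_{j \neq i} f_j) f_i = 1$ from the proof of Theorem \ref{BIGONE}(b) gives $(f_i,f_{n+1}) = R[t]$ for $i \leq n$, and pairwise comaximal non-units are automatically non-associate, so $\{(f_n)\}_{n \geq 1}$ is a sequence of distinct atoms.

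The main obstacle is exactly this failure of the Furstenberg property in $R[t]$ for general $R$. The resolution is that the constant term of every Euclidean iterate is a unit, which constrains each nontrivial factorization to split the degree and so lets the degree induction close.
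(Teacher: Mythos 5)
Your treatment of parts (b) and (c) is exactly the paper's (the paper offers no written proof, declaring Theorem \ref{2.1} ``immediate'': UFD $\Rightarrow$ atomic $\Rightarrow$ Furstenberg, Condition (E) from Example \ref{1.1}, then Theorem \ref{BIGONE}(b), with irreducible $=$ prime in a UFD). The value of your write-up is in part (a), where you correctly observe that the off-the-shelf application of Theorem \ref{BIGONE}(b) is not available for an arbitrary domain $R$: a divisor in $R[t]$ of a constant is a constant, so if $R$ has a nonzero nonunit with no irreducible divisor then so does $R[t]$. This is not a hypothetical worry --- by Example \ref{EX4.7} the ring $\overline{\Z}$ is an antimatter domain, so $\overline{\Z}[t]$ is not Furstenberg, even though Theorem \ref{2.1}(a) asserts it has infinitely many atoms. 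The paper glosses over this point entirely; your repair is the right one and is correct as written. Seeding the Euclidean iteration with $f_1 = t$ and always taking $y = t$ forces each iterate $x = t f_1 \cdots f_n + 1$ to have constant term $1$; a nonunit of $R[t]$ with unit constant term must have positive degree, so in any nontrivial factorization of $x$ both factors have unit constant term and strictly smaller positive degree, and the strong induction on degree produces an irreducible factor. The comaximality identity and the passage from pairwise comaximal irreducibles to distinct atoms are then verbatim from the proof of Theorem \ref{BIGONE}(b). In short: (b) and (c) match the paper; for (a) you have supplied an argument the paper needs but does not give, essentially proving that the specific elements produced by the Euclidean iteration land in the ``Furstenberg part'' of $R[t]$ even when $R[t]$ itself is not Furstenberg.
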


\begin{thm}
\label{2.2}
Let $R$ be a Furstenberg domain, not a field, such that $\# R > \# R^{\times}$.  Then $R$ has infinitely many atoms.
\end{thm}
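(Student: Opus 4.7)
The plan is to observe that this theorem is an immediate corollary of the two preceding results and requires no new ideas. First I would invoke Proposition \ref{1.4}: the hypothesis $\# R > \# R^{\times}$ directly gives that $R$ satisfies Condition (E). Then, since by hypothesis $R$ is a Furstenberg domain and not a field, all three hypotheses of part (b) of the Euclidean Criterion (Theorem \ref{BIGONE}) are satisfied.

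Applying Theorem \ref{BIGONE}(b) yields an infinite sequence $\{f_n\}_{n=1}^\infty$ of pairwise comaximal irreducibles, and hence an infinite sequence $\{(f_n)\}_{n=1}^\infty$ of distinct atoms, which is exactly the conclusion desired.

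There is essentially no obstacle here: the content lies entirely in the two ingredients (Proposition \ref{1.4} and Theorem \ref{BIGONE}), and the theorem is just their composition. The only thing worth flagging in the write-up is the chain of implications $\# R > \# R^{\times} \Rightarrow \text{Condition (E)} \Rightarrow \text{infinitely many pairwise comaximal irreducibles} \Rightarrow \text{infinitely many atoms}$, which makes clear why the cardinality hypothesis is doing real work (it supplies, via Proposition \ref{1.4}, the element $y$ needed in each inductive step of the Euclidean Criterion).
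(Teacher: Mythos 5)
Your proposal is correct and is exactly the argument the paper intends: the paper explicitly labels this theorem as an immediate application of the Euclidean Criterion, obtained by combining Proposition \ref{1.4} (which converts $\# R > \# R^{\times}$ into Condition (E)) with Theorem \ref{BIGONE}b). Nothing is missing.
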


\begin{thm}
\label{2.3}
Let $R$ be a Furstenberg domain, let $\mathcal{I}$ be the set of all irreducible elements of $R$.  Then $\mathcal{I}$ is either empty (if $R$ is a field) or infinite (otherwise).
\end{thm}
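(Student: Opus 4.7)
The plan is to reduce to the Euclidean Criterion where possible and handle the remaining case by a direct orbit-counting argument. The distinction from Theorem~\ref{2.2} is that here we count irreducible \emph{elements}, not atoms: a single irreducible $f$ yields $|R^{\times}|$ many irreducibles $uf$, all distinct (since $uf = u'f$ forces $u = u'$ in a domain), all irreducible (since associates of irreducibles are irreducible). So having infinitely many units automatically upgrades one irreducible into infinitely many.

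First I would observe that if $R$ is not a field, then $R$ must be infinite: a standard pigeonhole argument shows that a finite domain is a field, so a Furstenberg domain that is not a field cannot be finite. Next I would split on the cardinality of the unit group.

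If $R^{\times}$ is infinite, pick any irreducible $f_1 \in R$ (which exists since $R$ is Furstenberg and not a field: any nonzero nonunit has an irreducible divisor). Then $\{u f_1 : u \in R^{\times}\}$ is an infinite family of irreducibles by the remark above, so $\mathcal{I}$ is infinite.

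If $R^{\times}$ is finite, then $\# R > \# R^{\times}$ since $R$ is infinite, so Proposition~\ref{1.4} gives Condition (E). The Euclidean Criterion (Theorem~\ref{BIGONE}b) then produces infinitely many pairwise comaximal irreducibles, so in particular $\mathcal{I}$ is infinite. Combining the two cases gives the result. I do not foresee a main obstacle beyond remembering the finite-domain-is-a-field step, which is what rules out the degenerate possibility of a finite non-field Furstenberg domain.
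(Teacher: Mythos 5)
Your proof is correct and is essentially the paper's own argument: both split on whether $R^{\times}$ is finite or infinite, using the unit-orbit $\{uf : u \in R^{\times}\}$ of a single irreducible in the infinite case and the Euclidean Criterion via $\# R > \# R^{\times}$ (the content of Theorem \ref{2.2}) in the finite case. Your explicit remark that a finite domain is a field, so a non-field Furstenberg domain is infinite, just makes precise a step the paper leaves implicit.
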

\begin{proof}
Assume $\mathcal{I} \neq \varnothing$ and fix $f \in \mathcal{I}$.  If $R^{\times}$ is finite, Theorem \ref{2.2} yields infinitely many atoms.  If $R^{\times}$ is infinite, then $\{uf \mid u \in R^{\times}\}$ is an infinite subset of $\mathcal{I}$.
\end{proof}

\subsection{Supplement: Irreducibles in Residue Classes}
\textbf{} \\ \\ \noindent
We switch from an ancient theorem to matters of contemporary interest if we ask for infinitely many primes \emph{satisfying certain additional conditions}.  Here is a result along these lines,  relatively modest over $\Z$, but of a general algebraic nature.


\begin{lemma}
\label{LITTLECRTLEMMA}
Let $a,b,c$ be elements of a ring $R$.
If $(a,b) = R$ and $c \mid a+b$, then $(a,c) = (b,c) = R$.
\end{lemma}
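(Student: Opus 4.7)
The plan is to use Bézout-style manipulation: from $(a,b)=R$ we obtain a witness $1 = xa + yb$ for some $x,y \in R$, and from $c \mid a+b$ we get $a+b = cz$ for some $z \in R$, hence $b = cz - a$. Substituting this expression for $b$ into $xa + yb = 1$ yields
\[
1 = xa + y(cz - a) = (x-y)a + (yz)c,
\]
which exhibits $1 \in (a,c)$, so $(a,c) = R$. The identity $a+b = b+a$ means the hypotheses are symmetric in $a$ and $b$, so the same argument applied with the roles swapped (using $a = cz - b$) gives $(b,c) = R$.

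There is really no obstacle here: the whole content is a single substitution, and the symmetry between $a$ and $b$ handles the second conclusion for free. I would write the argument in two or three lines without any case analysis.
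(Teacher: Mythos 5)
Your proof is correct and is essentially the same argument as the paper's: the paper writes the chain of ideal identities $(a,c) \supset (a,cd) = (a,a+b) = (a,b) = R$ and leaves $(b,c)=R$ to symmetry, and your explicit substitution $1 = (x-y)a + (yz)c$ is just that chain unwound at the level of elements. No gap; both versions are fine.
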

\begin{proof}
Let $d \in R$ be such that $cd = a+b$.  Then
\[ (a,c) \supset (a,cd) = (a,a+b) = (a,b) = R. \qedhere \]
\end{proof}

\begin{lemma}
\label{POLLACKLEMMA}
Let $R$ be a domain, not a field, satisfying Condition (E).  For any $at + b \in R[t]$ with $a \in R^{\bullet}$, there is $x \in R$ such that $ax+b$ is a nonzero nonunit.
\end{lemma}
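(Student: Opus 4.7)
The plan is to case-split on whether $a$ is a unit, and reduce everything to a single application of Condition (E) applied to $a$ itself.

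First, if $a \in R^{\times}$, the affine map $x \mapsto ax + b$ is a bijection from $R$ to $R$, so it hits every element of $R$. Since $R$ is not a field, there exists a nonzero nonunit $c \in R$; take $x = a^{-1}(c-b)$, which gives $ax + b = c$, a nonzero nonunit.

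Next, suppose $a \notin R^{\times}$. Apply Condition (E) to $a \in R^{\bullet}$ to obtain $y \in R$ with $ya + 1 \notin R^{\times}$. The key observation is that because $a$ is \emph{not} a unit, we also have $ya + 1 \neq 0$ — otherwise $y \cdot a = -1$ would exhibit $a$ as a unit. Thus $ya + 1$ is a \emph{nonzero} nonunit. If $b \neq 0$, set $x = yb$, so that
\[ ax + b = ayb + b = b(ya + 1). \]
This is nonzero because $R$ is a domain and both factors are nonzero, and it is a nonunit because $ya+1$ is a nonunit (a multiple of a nonunit is a nonunit, since a unit cannot be written as such a product). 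If instead $b = 0$, then take $x = 1$; we get $ax + b = a$, which is a nonzero nonunit by assumption.

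The only subtlety is that Condition (E) guarantees $ya + 1$ is a nonunit but not necessarily nonzero — so one has to verify that the element we produce as $ax + b$ is nonzero. This is precisely why the case split on whether $a$ is a unit is the right one: when $a$ is a nonunit, the relation $ya + 1 = 0$ is impossible, and when $a$ is a unit the linear map $x \mapsto ax+b$ is surjective and we do not need Condition (E) at all. Beyond this bookkeeping I do not foresee any real obstacle.
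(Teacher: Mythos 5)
Your proof is correct, and it takes a genuinely different (and in one respect more careful) route than the paper's. The paper case-splits on $b$: if $b=0$ take any nonzero nonunit $x$; if $b$ is a nonzero nonunit take $x=0$; and if $b\in R^{\times}$ it applies Condition (E) to $b^{-1}a$ and writes $ax+b=b(b^{-1}ax+1)$. You instead case-split on $a$: when $a\in R^{\times}$ you use surjectivity of $x\mapsto ax+b$ together with the hypothesis that $R$ is not a field, and when $a\notin R^{\times}$ you apply Condition (E) to $a$ itself and factor $ax+b=b(ya+1)$ with $x=yb$. The payoff of your decomposition is exactly the "subtlety" you flag: Condition (E) only promises a nonunit, and $yx+1=0$ is possible precisely when $x$ is a unit. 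In the paper's $b\in R^{\times}$ case, if $a$ also happens to be a unit then the element $b^{-1}ax+1$ produced by Condition (E) could a priori be $0$, making $P(x)=0$ rather than a nonzero nonunit (e.g.\ $R=\Z$, $a=b=1$, where $y=-1$ witnesses Condition (E) for $1$ but gives $P(-1)=0$); closing that gap requires exactly the surjectivity observation you make in your unit case. By arranging the cases so that Condition (E) is only ever applied to a nonunit, you get the nonvanishing of $ya+1$ for free, which mirrors how the paper itself argues nonvanishing in the proof of the Euclidean Criterion. So your argument is a clean, complete alternative; the paper's is essentially the same length but leans on the reader to patch the unit-times-unit corner case.
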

\begin{proof}
Put $P(t) = at+b$.  If $b = 0$, take any nonzero nonunit $x \in R$.  If $b \in R^{\times}$, by Condition (E) there is $x \in R$ such that $b^{-1}ax + 1 \notin R^{\times}$ so $P(x) = b(b^{-1}ax+1)$ is a nonzero nonunit.  If $b \in R$ is a nonzero nonunit, take $x = 0$.
\end{proof}
\noindent
The proof of the following result was suggested to me by Paul Pollack.

\begin{thm}
\label{POLLACKTHM}
Let $R$ be an atomic domain satisfying Condition (E), let $I$ be a nonzero ideal of $R$, and let $H$ be a proper subgroup of $(R/I)^{\times}$.  Then there are infinitely many pairwise comaximal irreducibles $f$ such that the class of $f$ modulo $I$ lies in $(R/I)^{\times} \setminus H$.
\end{thm}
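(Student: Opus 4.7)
The plan is to adapt the proof of Theorem \ref{BIGONE}(b), but arrange the Euclidean-style construction so that a suitable irreducible factor always lands outside $H$. I proceed by induction on $n$: supposing I have pairwise comaximal irreducibles $f_1,\ldots,f_n$ (possibly $n = 0$) whose classes mod $I$ lie in $(R/I)^{\times} \setminus H$, I aim to produce $f_{n+1}$ with the same properties.

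The key preliminary observation is that since each $[f_i]$ is a unit in $R/I$, each ideal $(f_i)$ is comaximal with $I$. Together with the inductive pairwise comaximality of the $f_i$'s, this means $(f_1),\ldots,(f_n),I$ are pairwise comaximal. Fix $b_0 \in R$ with $[b_0] \in (R/I)^{\times} \setminus H$ (which exists since $H$ is proper), and by CRT choose $b \in R$ with $b \equiv 1 \pmod{f_i}$ for all $i$ and $b \equiv b_0 \pmod{I}$. By Fact \ref{FACT}(d), $(f_1 \cdots f_n) \cap I$ is a nonzero ideal, so pick any $\alpha$ in it (when $n = 0$, simply pick $\alpha \in I^{\bullet}$). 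Apply Lemma \ref{POLLACKLEMMA} to $P(t) = \alpha t + b$ to obtain $x \in R$ such that $P(x)$ is a nonzero nonunit.

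By construction, $P(x) \equiv 1 \pmod{f_i}$ for each $i$ and $P(x) \equiv b_0 \pmod{I}$. Since $R$ is atomic, write $P(x) = g_1 \cdots g_k$ with each $g_j$ irreducible. Each $g_j$ divides $P(x)$, which is a unit mod $I$, so every $[g_j]$ lies in $(R/I)^{\times}$. Because $[g_1]\cdots[g_k] = [b_0] \notin H$ and $H$ is a \emph{subgroup}, some factor $[g_j]$ must lie outside $H$; take $f_{n+1} := g_j$. Comaximality of $f_{n+1}$ with each $f_i$ follows from $P(x) \equiv 1 \pmod{f_i}$ via Lemma \ref{LITTLECRTLEMMA}-style reasoning: $(f_{n+1}, f_i) \supseteq (P(x), f_i) = R$. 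This closes the induction and yields the desired infinite sequence.

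The main obstacle, which is really just careful bookkeeping, is selecting $b$ so that the polynomial value $P(x)$ reduces to $1$ modulo each previously constructed $f_i$ (to force comaximality) while simultaneously reducing to a class outside $H$ modulo $I$ (to trap a factor outside $H$). Multiplying $t$ by an element of $I \cap (f_1 \cdots f_n)$ decouples the two congruences, and the subgroup hypothesis on $H$ is exactly what licenses the ``pigeonhole'' conclusion that some $[g_j]$ must avoid $H$; if $H$ were only assumed to be a subset, this step would collapse.
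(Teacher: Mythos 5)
Your proof is correct, and it follows the same overall strategy as the paper's -- an Euclid-style induction in which Lemma \ref{POLLACKLEMMA} is applied to a degree-one polynomial whose value is controlled modulo $I$ and modulo the previously constructed irreducibles, after which the subgroup hypothesis on $H$ pigeonholes some irreducible factor into $(R/I)^{\times} \setminus H$. Where you differ is in how the polynomial is built. The paper fixes $\alpha$ with $r(\alpha) \notin H$ and a quasi-inverse $\beta$ with $\alpha\beta - 1 \in I \setminus \{0\}$, and takes $P(t) = (\alpha t+1)(\alpha\beta-1)f_1 \cdots f_n + \alpha$; it then has to work to recover comaximality of the new factor $g_1$ with the old $f_i$, via the congruence $y \equiv -f_1\cdots f_n \pmod{\alpha}$ and Lemma \ref{LITTLECRTLEMMA}, and it carries the extra inductive conditions $(f_i,\alpha) = (f_i,I) = R$ to rule out $g_1$ being associate to some $f_i$. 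You instead observe that $(f_1),\ldots,(f_n),I$ are pairwise comaximal (because each $[f_i]$ is a unit mod $I$), invoke CRT to choose $b$ with $b \equiv 1 \pmod{f_i}$ and $b \equiv b_0 \pmod{I}$, and multiply $t$ by a nonzero element of $(f_1\cdots f_n) \cap I$ (nonzero by Fact \ref{FACT}) so that $P(x) \equiv 1 \pmod{f_i}$ and $P(x) \equiv b_0 \pmod I$ simultaneously. This makes comaximality of the new factor with each $f_i$ immediate (since $f_{n+1} \mid P(x)$ and $(P(x),f_i)=R$), and non-associateness comes for free from comaximality, so Lemma \ref{LITTLECRTLEMMA} and the auxiliary conditions on $\alpha$ become unnecessary. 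What the paper's version buys is independence from the Chinese Remainder Theorem -- it manufactures the needed congruences by hand -- which fits its expository aim of staying close to elementary ``Euclidean'' manipulations; what your version buys is cleaner bookkeeping and a shorter verification. The one point worth making explicit is that Lemma \ref{POLLACKLEMMA} requires $R$ not to be a field, which here follows from the hypothesis that $(R/I)^{\times}$ has a proper subgroup (so $I$ is a proper nonzero ideal).
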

\begin{proof}
Let $r: R \ra R/I$ be the quotient map, let $\alpha \in R$ be such that $r(\alpha) \in (R/I)^{\times} \setminus H$, and let $\beta \in R$ be such that $\alpha \beta -1 \in I \setminus \{0\}$.  Inductively, assume that we have pairwise irreducibles $f_1,\ldots,f_n$ of $R$ such that 
$(f_i,\alpha) = (f_i,I) = R$ for all $i$ and such that $r(f_i) \notin H$.   Let \[P(t) = (\alpha t+1)(\alpha \beta-1)f_1 \cdots f_n + \alpha \in R[t]. \]  (We need to include the base case $n = 0$, and in this case $f_1 \cdots f_n = 1$.)  By Lemma \ref{POLLACKLEMMA} there is $x \in R$ such that
\[ y = (\alpha x+1)(\alpha \beta -1)f_1 \cdots f_n + \alpha  \]
is a nonzero nonunit, so we get an irreducible factorization
\[ y = g_1 \cdots g_s \]
with $s \geq 1$.  Then
\[r(g_1) \cdots r(g_s) = r(y) = r(\alpha) \in (R/I)^{\times} \setminus H, \]
so $(g_j,I) = 1$ for all $j$ and there is at least one $g_j$, say $g_1$, such that $r(g_1) \notin H$.  Now $g_1$ cannot be associate
to any $f_i$; if so $g_1$ and hence also $f_i$ would divide $\alpha$: if $\alpha \in R^{\times}$ this contradicts the irreducibility of $f_i$; if not, this contradicts $(f_i, \alpha) = 1$.  Moreover $y \equiv -f_1 \cdots f_n \pmod{\alpha}$ so $y \in (R/\alpha)^{\times}$, hence also $g_1 \in (R/\alpha)^{\times}$, i.e., $(g_1,\alpha) = R$.  Finally, since
\[ (\alpha x + 1)(\alpha \beta -1)f_1 \cdots f_n \equiv -f_1 \cdots f_n \pmod{\alpha}, \]
we have $((\alpha x + 1)(\alpha \beta-1)f_1 \cdots f_n,\alpha) = R$, so by Lemma \ref{LITTLECRTLEMMA} we have 
\[(g_1,(\alpha x + 1)(\alpha \beta -1)f_1 \cdots f_n) = R \] so $( g_1, f_i ) = R$ for all $i$. Thus we may take $f_{n+1} = g_1$, completing the induction.
\end{proof}

\noindent
When $R = \Z$, we get: for any proper subgroup $H \subsetneq (\Z/N\Z)^{\times}$, there are infinitely many
prime numbers $p$ such that $\pm p \pmod{N} \notin H$.  Moreover, in this classical case one can run the argument
with positive integers only and so get rid of the annoying $\pm$.  This is a special case of Dirichlet's theorem on primes in arithmetic
progressions.  It is an observation of A. Granville -- unpublished by him, but reproduced in \cite[Thm. 1.16]{Pollack} -- that this case can be proved in an elementary ``Euclidean'' way.  The special case of trivial $H$ -- for all $N \geq 3$ there are infinitely many primes $p \not \equiv 1 \pmod{N}$ -- is older and better known.  It is also simpler -- just consider $N p_1 \cdots p_{n-1} - 1$.  This case does not use that $\Z$ is a UFD, but Granville's argument does.  The most auspicious replacement for coprimality arguments is by comaximality, and that is what we've done here.

\section{A ``Topological'' Interlude}

\subsection{Furstenberg's Lemma}
\textbf{} \\ \\ \noindent
In this section we will give several proofs of the following result.

\begin{thm}
\label{BIGFURST}
Let $R$ be a Furstenberg domain with at least one and only finitely many irreducibles $f_1,\ldots,f_n$.  Then: \\
a) We have $\# R^{\times} = \# R$.  \\
b) More precisely there is a nonzero ideal $I$ of $R$ such that $1+I \subset R^{\times}$.
\end{thm}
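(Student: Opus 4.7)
The plan is to observe first that (b) implies (a) for free: if $I$ is a nonzero ideal and $1+I \subset R^\times$, then the additive bijection $x \mapsto 1+x$ from $I$ to $1+I$ combined with Fact \ref{FACT}c) gives $\#R^\times \geq \#(1+I) = \#I = \#R$, and the reverse inequality is trivial. So the real content is part (b), and it suffices to produce a single nonzero $x \in R^\bullet$ for which $1+(x) \subset R^\times$.

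For that I would take the obvious Euclidean candidate: set
\[ I = (f_1 f_2 \cdots f_n), \]
the principal ideal generated by the product of all the (finitely many) irreducibles. To see that $1+I \subset R^\times$, let $y \in R$ and consider $z := 1 + y f_1 \cdots f_n$. First, $z \neq 0$, for otherwise $y f_1 \cdots f_n = -1$ would force each $f_i$ to be a unit. Suppose for contradiction that $z$ is a nonunit. Then $z$ is a nonzero nonunit, and since $R$ is a Furstenberg domain, $z$ admits an irreducible divisor, which must be associate to some $f_i$. But then $f_i \mid z$ and $f_i \mid y f_1 \cdots f_n$, so $f_i \mid 1$, contradicting irreducibility. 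Hence $z \in R^\times$, and $I$ is the ideal we wanted. This is essentially the contrapositive of Theorem \ref{BIGONE}b), just repackaged as a constructive ``Euclidean'' production of the ideal $I$.

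The main (minor) obstacle is just being careful that $1 + y f_1 \cdots f_n$ is nonzero before invoking the Furstenberg property, which requires nonzero nonunits; this is where having at least one irreducible $f_1$ is used. Given that Section 3 is titled a ``topological interlude,'' I expect the author to present this same argument a second time in Furstenberg's topological language: put a topology on $R$ whose basic open sets are the cosets $a + J$ of nonzero ideals $J$ (each such coset is also closed), observe that the Furstenberg hypothesis gives
\[ R \setminus R^\times = \bigcup_{i=1}^{n} (f_i) \cup \{0\}^{c\text{-correction}}, \]
which is a finite union of closed sets and therefore closed, so $R^\times$ is open, and a basic neighborhood of $1$ contained in $R^\times$ is exactly an ideal $I$ with $1 + I \subset R^\times$. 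Both proofs yield the same $I$.
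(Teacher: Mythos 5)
Your proof is correct, and part of it is genuinely different from what the paper does. Your main argument---take $I=(f_1\cdots f_n)$ and check directly that $z=1+yf_1\cdots f_n$ is nonzero (else each $f_i$ would be a unit) and has no irreducible divisor (else some $f_i$ would divide $1$), hence is a unit by the Furstenberg property---is exactly the ``Euclidean'' contrapositive of Theorem \ref{BIGONE}b), which the paper acknowledges in the sentence right after the statement but then deliberately does \emph{not} use: the whole point of Section 3 is to reprove the result in the spirit of Furstenberg's ``topological'' proof. The paper gives three such proofs, all resting on Furstenberg's Lemma ($R^{\times}=\bigcap_{i=1}^n (R\setminus(f_i))$ when $f_1,\ldots,f_n$ are all the irreducibles): one via the adic topology (your second paragraph reproduces this one faithfully; note only that since $0\in(f_i)$ for every $i$, the identity $R\setminus R^{\times}=\bigcup_{i=1}^n (f_i)$ is exact and your ``correction'' term is unnecessary), one via $\F_2$-valued periodic functions following Cass--Wildenberg (the characteristic function $\chi=\prod_i(1-\chi_{(f_i)})$ of $R^{\times}$ is $(f_1\cdots f_n)$-periodic and $\chi(1)=1$), and one via Mercer's ``pleasant sets.'' What your direct argument buys is brevity and the explicit ideal $I=(f_1\cdots f_n)$ (which the Cass--Wildenberg proof also produces, while the purely topological proof only yields \emph{some} nonzero ideal, namely an intersection $\bigcap_{i\in J}(f_i)\supset(f_1\cdots f_n)$); what the paper's packaging buys is the thematic point that Euclid's argument can be recast as ``$R^{\times}$ is open and nonempty, hence of full cardinality,'' which is the springboard for the paper's later discussion of whether Furstenberg's proof has any essential topological content. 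Your reduction of a) to b) via Fact \ref{FACT}c) is also fine and matches the paper's use of ``every nonempty open has cardinality $\#R$.''
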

\noindent
Theorem \ref{BIGFURST} is the contrapositive of part b) of the Euclidean Criterion, without the information on comaximality.  The proofs that we give here are inspired by the famous paper of H. Furstenberg \cite{Furstenberg55}.   The essential core of his argument is the
observation that in $\Z$ the set of elements not divisible by any prime number is $\pm 1$.  Notice that has nothing to do with the natural ordering of $\Z$ that underlies
most of the classical proofs of Euclid's Theorem.  In fact the property of $\Z$ being used is that $\Z$ is a Furstenberg domain.

\begin{lemma}(Furstenberg's Lemma) \textbf{} \\
a) A domain $R$ is a Furstenberg domain iff $R^{\times} = \bigcap_{f \text{ irreducible }} R \setminus (f)$. \\
%
b) In a Furstenberg domain with at least one and only finitely many irreducibles $f_1,\ldots,f_n$, we have
$\bigcap_{i=1}^n (R \setminus (f_i)) = R^{\times}$.
\end{lemma}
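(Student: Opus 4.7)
The plan is to prove part (a) essentially by unwinding the definitions, since the assertion ``$f$ divides $x$'' is literally ``$x \in (f)$'', and then to derive (b) as an immediate corollary by collapsing associate irreducibles.

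For the forward direction of (a), assume $R$ is Furstenberg. To see $R^{\times} \subseteq \bigcap_f (R \setminus (f))$: if a unit $u$ lay in some $(f)$ with $f$ irreducible, then $f \mid u$ would force $f \in R^{\times}$, contradicting irreducibility. (This half never uses the Furstenberg hypothesis; it is just ``units have no nontrivial divisors.'') Conversely, take $x \in \bigcap_f (R \setminus (f))$. Provided $R$ has at least one irreducible $f_0$, we have $0 \in (f_0)$, so $x \neq 0$; and if $x$ were in addition a nonunit, the Furstenberg property would supply an irreducible $g$ with $g \mid x$, i.e.\ $x \in (g)$, contradicting the choice of $x$. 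Hence $x \in R^{\times}$. (If $R$ is a field, the intersection is vacuous and the assertion is to be understood on $R^{\bullet}$.) For the reverse implication of the ``iff,'' assume the displayed equality and let $x$ be a nonzero nonunit; then $x \notin R^{\times}$ forces $x$ to fail the intersection, so $x$ lies in some $(g)$ with $g$ irreducible, exhibiting an irreducible divisor and proving $R$ is Furstenberg.

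For (b), the key observation is that $(uf) = (f)$ for every $u \in R^{\times}$, so the intersection in (a) depends only on the set of distinct principal ideals generated by irreducibles. Under the hypothesis that $R$ has at least one and only finitely many irreducibles $f_1,\ldots,f_n$, these ideals form a nonempty finite family, and collapsing the intersection in (a) onto it produces $\bigcap_{i=1}^n (R \setminus (f_i)) = R^{\times}$ verbatim.

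The main ``obstacle'' is essentially cosmetic: there is no deep content, only a translation of definitions via the dictionary between divisibility and principal ideals. The only care needed is (i) isolating $0$ from the intersection, which is why the existence of at least one irreducible matters, and (ii) noting that the intersection is unchanged under replacing an irreducible by an associate, which is what makes (b) an honest consequence of (a) even when the finite list $f_1,\ldots,f_n$ has repetitions across associate classes.
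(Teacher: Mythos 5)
Your proof is correct and is precisely the ``virtually immediate'' definitional unwinding that the paper itself omits (it leaves the lemma to the reader), so there is no competing argument to compare against. Your parenthetical handling of the empty intersection when $R$ has no irreducibles (i.e., when $R$ is a field) is a sensible treatment of the one edge case the statement glosses over, and the rest matches the intended argument exactly.
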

\noindent
The proof is virtually immediate and is left to the reader.

\subsection{Following Furstenberg}
\textbf{} \\ \\ \noindent
Let $R$ be a domain.  By Fact \ref{FACT}d), for each $x \in R$, the family \[\mathcal{C}(x) = \{x + I \mid I \text{ is a nonzero ideal of } R\}\] is closed under finite intersections, so $\{\mathcal{C}(x)\}_{x \in X}$ is a system of neighborhood bases for a topology on $R$ -- let us call it the \textbf{adic topology} -- in which $U \subset R$ is open iff for all $x \in U$ there is a nonzero ideal $I$ with $x+I \subset U$.  By Fact \ref{FACT}c), every nonempty open has cardinality $\# R$.
\\ \\
\emph{Proof of Theorem \ref{BIGFURST}}: let $R$ be a Furstenberg domain with at least one and only finitely many irreducibles $f_1,\ldots,f_n$.
Then each $(f_i)$ is open, hence its complement $R \setminus (f_i)$, being a union of cosets of $(f_i)$, is also open.  By Furstenberg's Lemma $R^{\times} = \bigcap_{i=1}^n (R \setminus (f_i))$
is open.  Since $1 \in R^{\times}$, we have
$\# R^{\times} = \# R$.  More precisely, $R^{\times} \supset 1 + I$ for some nonzero ideal of $R$.


\subsection{Following Cass-Wildenberg}
\textbf{} \\ \\ \noindent
Let $R$ be a domain, and let $\F_2$ be the field of two elements.  For an ideal $I$ of $R$, a function $f: R \ra \F_2$ is \textbf{I-periodic} if $f(x+y) = f(x)$ 
for all $x \in X$ and $y \in I$.

\begin{lemma}
Let $R$ be a domain, and let $I,I_1,\ldots,I_n$ be nonzero ideals of $R$. \\
a) If $I_2 \subset I_1$ and $f: R \ra \F_2$ is $I_1$-periodic, it is also $I_2$-periodic. \\
b) If for all $1 \leq i \leq n$, $f_i: R \ra \F_2$ is $I_i$-periodic, then the pointwise product $f_1 \cdots f_n: R \ra \F_2$ is
$I_1 \cdots I_n$-periodic.  \\
c) If $f: R \ra \F_2$ is $I$-periodic, then for all $x \in R$, we have
\[ \# \{y \in R \mid f(y) = f(x)\} = \# R. \]
\end{lemma}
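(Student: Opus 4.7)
The plan is to handle the three parts in order, using the definitions directly and invoking Fact \ref{FACT} for the cardinality statement in (c).

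For part (a), I would unwind the definition: if $f$ is $I_1$-periodic and $y \in I_2 \subset I_1$, then for every $x \in R$ we have $y \in I_1$, hence $f(x+y) = f(x)$. That is literally the statement that $f$ is $I_2$-periodic.

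For part (b), rather than trying to verify the shift property directly on a general element of $I_1 \cdots I_n$ (which would force me to argue on sums of products $a_{1,k}\cdots a_{n,k}$ and track each factor separately through the product function), I would reduce to part (a). The point is that $I_1 \cdots I_n \subset I_i$ for every $i$, so part (a) tells me that each $f_i$ is $I_1 \cdots I_n$-periodic. Then for $x \in R$ and $y \in I_1 \cdots I_n$,
\[
(f_1 \cdots f_n)(x+y) = f_1(x+y) \cdots f_n(x+y) = f_1(x) \cdots f_n(x) = (f_1 \cdots f_n)(x),
\]
as desired. This is the cleanest route because pointwise product of periodic functions is periodic with respect to any common period.

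For part (c), I would note that $I$-periodicity means $f$ is constant on each coset $x + I$. Hence $\{y \in R : f(y) = f(x)\} \supset x + I$, and by Fact \ref{FACT}(b) the map $z \mapsto x + z$ is a bijection $R \to x + I$ (for any fixed nonzero $\alpha \in I$, Fact \ref{FACT}(b),(c) give $\# I = \# R$), so $\# (x+I) = \# R$. Since the set in question is also a subset of $R$, its cardinality equals $\# R$. I do not foresee any real obstacle here; the only mild subtlety is remembering to reduce (b) to (a) rather than attempting a direct verification, and to invoke the domain hypothesis through Fact \ref{FACT} in part (c) to conclude that every nonzero ideal has the same cardinality as $R$.
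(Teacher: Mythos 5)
Your proposal is correct and follows essentially the same route as the paper: part (a) is definitional, part (b) reduces to (a) via the containment of the product ideal in each factor (the paper phrases this through $\bigcap_{i=1}^n I_i \supset I_1\cdots I_n$, which is the same observation), and part (c) rests on Fact \ref{FACT} to see that a coset of a nonzero ideal has cardinality $\# R$. No gaps.
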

\begin{proof}
a) This is immediate from the definition.   \\
b) Certainly $f_1 \cdots f_n$ is $\bigcap_{i=1}^n I_i$-periodic, and $\bigcap_{i=1}^n I_i \supset I_1 \cdots I_n$.  Apply part a). \\
c) Choose a nonzero $\alpha \in I$.  Then $f(x+R\alpha) = f(x)$, and $\# R \alpha = \# R$.
\end{proof}

\noindent
\emph{Proof of Theorem \ref{BIGFURST}}: \\
Step 1: For $1 \leq i \leq n$, let $\chi_i: R \ra \F_2$ be the characteristic function of $(f_i)$; put \[\chi = \prod_{i=1}^n (1-\chi_i). \] Each $\chi_i$ is $(f_i)$-periodic, hence so too is $1-\chi_i$, and thus $\chi$ is $(f_1\cdots f_n)$-periodic.  Moreover $\chi$ is the characteristic function of $\bigcap_{i=1}^n (R \setminus (f_i)) = R^{\times}$.  \\
Step 2: Since $\chi(1) = 1$, $\# R^{\times} = \{x \in R \mid \chi(x) = 1\} = \# R$: part a). \\
Step 3: More precisely $\chi(1+R f_1 \cdots f_n) = 1$, so $R f_1 \cdots f_n + 1 \subset R^{\times}$: part b). $\qed$

\subsection{Following Mercer}
\textbf{} \\ \\ \noindent
Let $R$ be a domain.  Call a subset $X \subset R$ \textbf{lovely} if it is of the form $x+I$ for $x \in R$ and a nonzero ideal $I$ of $R$, i.e., if it is a coset of a nonzero ideal.  Call a subset $X \subset R$ \textbf{pleasant} if it is a union of lovely subsets.   If $I$ is a nonzero ideal of $R$, then $R \setminus I$ is a union of cosets of $I$ hence pleasant.  If $X,Y \subset R$ are pleasant sets and $x \in X \cap Y$, there are nonzero ideals $I,J$ of $R$ such that $x+I \subset X$ and $x+J \subset Y$.  By Fact \ref{FACT}d) $x + (I \cap J) = (x+I) \cap (x+J)$ is a lovely subset of $X \cap Y$ containing $x$.  So $X \cap Y$ is pleasant.  By Fact \ref{FACT}c), every nonempty pleasant subset has cardinality $\# R$.
\\ \\
\emph{Proof of Theorem \ref{BIGFURST}}: let $R$ be a Furstenberg domain with at least one and only finitely many irreducibles $f_1,\ldots,f_n$.  By Furstenberg's Lemma, $R^{\times}$ is the finite intersection of complements of nonzero ideals so is pleasant.  Since $1 \in R^{\times}$, we have
$\# R^{\times} = \# R$.  More precisely, $R^{\times} \supset 1 + I$ for some nonzero ideal of $R$.

\subsection{Debriefing}
\textbf{} \\ \\ \noindent
The three proofs given above are generalizations of the proofs of Euclid's Theorem given by Furstenberg \cite{Furstenberg55}, Cass-Wildenberg \cite{Cass-Wildenberg03} and Mercer \cite{Mercer09}.   The latter two works take the detopologization of Furstenberg's proof as their goal.
\\ \indent
Our presentation of the argument of $\S$3.4 differs superficially from Mercer's.  We chose the words ``lovely'' and ``pleasant'' precisely because they do not have a commonly understood technical mathematical meaning: had we said ``basic'' and ``open'' then the reader's attention would have been drawn to the fact that since the basic sets are closed under finite intersections, they form the base of a topology.  Mercer's exposition takes pains to point out that the underlying fact here is just that finite intersections of unions are
unions of finite intersections.  Of course this is a basic logical principle: conjunctions distribute over disjunctions and conversely.  Like many basic logical principles it is completely innocuous when used in context (as in our version of the argument).  That the pleasant sets form a topology on $R$ is no more and no less than a crisp enunciation of the facts we need to check in the first part of the proof.
I find it quite striking (and pleasant!) that the facts can be enunciated in this way, but I must now agree with those who have claimed that there is no \emph{essential topological content} in Furstenberg's argument.\footnote{Furstenberg does not claim a \emph{topological proof of the infinitude of the primes} but rather a \emph{``topological'' proof of the infinitude of the primes}.}
 \\ \indent
 The use of periodic functions involves slightly more packaging, but of a standard kind: it is well known that the Boolean ring $2^R$ of subsets of $R$ can be represented as the ring $\operatorname{Maps}(R,\F_2)$
 with pointwise addition and multiplication.  We recommend wikipedia and Glaymann \cite{Glaymann67} as references.  Glaymann develops this correspondence and applies it to prove such identities as  $A \Delta B = C \iff B \Delta C = A \iff C \Delta A = B$...in a manner intended to be used in the high school classroom.  This is an interesting snapshot of ``the new math'' near its zenith.

\subsection{The Ubiquitous Theorem}
\textbf{} \\ \\
Here is a result that complements Theorem \ref{BIGFURST}.  It is not deep, but it will play a recurring role for us as a common intersection of various constructions and themes.   The first proof 
that we give follows the ``topological conceit'' of this section.  We will give other, simpler, proofs later on.


\begin{thm}
\label{GOLOMB}
Let $R$ be a domain, not a field, with only finitely many maximal ideals $\mm_1,\ldots,\mm_n$.  Then: \\
a) We have $\# R^{\times} = \# R$. \\
b) More precisely there is a nonzero ideal $I$ of $R$ such that $1 + I \subset R^{\times}$.
\end{thm}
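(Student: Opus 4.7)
The plan is to mimic the ``pleasant sets'' argument of $\S 3.4$, replacing the role of the atoms $(f_i)$ with the maximal ideals $\mm_i$. The key algebraic fact to exploit is that in any ring, the complement of $R^{\times}$ is precisely the union of the maximal ideals, so
\[ R^{\times} = \bigcap_{i=1}^n (R \setminus \mm_i). \]

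First I would observe that since $R$ is not a field, the zero ideal is not maximal, and therefore each $\mm_i$ is a \emph{nonzero} ideal of $R$. Hence each complement $R \setminus \mm_i$ is a union of cosets of a nonzero ideal, i.e.\ a pleasant subset of $R$ in the sense of $\S 3.4$. Since the pleasant sets are closed under finite intersections (this is exactly the computation carried out in $\S 3.4$ using Fact \ref{FACT}d)), the intersection $R^{\times}$ is pleasant. Because $1 \in R^{\times}$, there must be a nonzero ideal $I$ with $1 + I \subset R^{\times}$, which is part b); and because every nonempty pleasant set has cardinality $\# R$ by Fact \ref{FACT}c), we conclude $\# R^{\times} = \# R$, which is part a).

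If one prefers a concrete witness for the ideal $I$ in part b) (and thereby a ``simpler proof'' of the sort the author advertises for later), one can simply take $I = \mm_1 \mm_2 \cdots \mm_n$ (or $\bigcap_i \mm_i$, the Jacobson radical): for $x \in I$ we have $x \in \mm_j$ for every $j$, so $1 + x \equiv 1 \pmod{\mm_j}$ lies outside every maximal ideal and is therefore a unit. Part a) then follows from Fact \ref{FACT}c) applied to the nonzero ideal $I$.

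There is no real obstacle here: once one identifies $R^{\times}$ as the complement of the finite union $\bigcup_i \mm_i$, the result is essentially the statement that the Jacobson radical is contained in every maximal ideal, repackaged through the pleasant-set formalism. The only point that requires a moment's care is ensuring $R$ not being a field forces each $\mm_i \neq (0)$, so that cosets of $\mm_i$ are genuinely pleasant in the sense used above.
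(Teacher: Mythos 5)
Your proof is correct and is essentially the argument the paper gives: both rest on the identity $R^{\times} = \bigcap_{i=1}^n (R\setminus \mm_i)$, the observation that each $\mm_i$ is nonzero because $R$ is not a field, and Fact \ref{FACT} to conclude that a finite intersection of unions of cosets of nonzero ideals contains a coset $1+I$ of a nonzero ideal and has cardinality $\# R$. Your concrete witness $I = \bigcap_i \mm_i$ is exactly the ``simpler proof'' the paper supplies later via the Jacobson radical and Corollary \ref{3.33}.
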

\begin{proof}
We endow $R$ with the topology for which, for $x \in R$, $\mathcal{C}(x) = \{x + \mm \mid \mm \text{ is a maximal ideal of } R \}$ is a neighborhood subbase at $x$: that is, $U \subset R$ is open iff for all $x \in U$ there is a subset $J \subset \{1,\ldots,n\}$ such that
\[ \bigcap_{i \in J} (x + \mm_i) = x + \bigcap_{i \in J} \mm_i \subset U. \]
Fact \ref{FACT} gives $\bigcap_{i \in J} \mm_i \supsetneq (0)$, so every nonempty open has cardinality $\# R$.  Each $R \setminus \mm_i$, being a union of cosets of $\mm_i$, is also open.  Therefore \[R^{\times} = \bigcap_{i=1}^n (R \setminus \mm_i)\] is open.  Since $1 \in R^{\times}$ we have $\# R^{\times} = \# R$.  More precisely there is a subset $J \subset \{1,\ldots,n\}$ such that $1+\bigcap_{i \in J} \mm_i \subset R^{\times}$, and thus also $1 + \bigcap_{i=1}^n \mm_i \subset R^{\times}$.
\end{proof}

\subsection{Supplement: Further Topologies on a Domain}
\textbf{} \\ \\ \noindent
Here is a common generalization of Theorems \ref{BIGFURST} and \ref{GOLOMB}: let $\mathcal{J}$ be a family of nonzero ideals of a domain $R$, and suppose there are $I_1,\ldots,I_n \in \mathcal{J}$ such that $R^{\times} = \bigcap_{i=1}^n (R \setminus I_i)$.  Then $1 + \bigcap_{i=1}^n I_i \subset R^{\times}$, so in particular $\# R^{\times} = \# R$.
\\ \indent
Look again at Theorem \ref{BIGFURST}: instead of taking $\mathcal{J}$ to be the family of all nonzero ideals, we could take $\mathcal{J} = \{ (f_1),\ldots,(f_n)\}$ and endow $R$ with the unique translation-invariant topology with $\mathcal{J}$ as a neighborhood subbase at $0$.  This coarsens the adic topology\footnote{The adic topology on a domain is always Hausdorff, but in a Furstenberg domain with finitely many irreducibles, this new topology is not.} so that being open yields the sharper conclusion $1 + \bigcap_{i=1}^n (f_i) \subset R^{\times}$.  In particular
$1 + ( f_1 \cdots f_n ) \subset R^{\times}$.  We are back to a version of Euclid's argument.
\\ \indent
The adic topology on $\Z$ is not very interesting \emph{as a topological space}: it is countably infinite, metrizable, totally disconnected and without isolated points, hence homeomorphic to the Euclidean topology on $\Q$.  In \cite{Golomb59}, Golomb proved Euclid's Theorem using the topology on $\Z^+$ with base the one-sided arithmetic progressions $\{an + b \mid n \in \Z^+\}$ for \emph{coprime} $a,b \in \Z^+$. Golomb's topology makes $\Z^+$ into a countably infinite connected Hausdorff space...which is already interesting. 
\\ \indent
In a domain $R$ that is not a field, we may consider the \textbf{Golomb topology} with neighborhood base at $x \in R$ given by \[\mathcal{C}(x) = \{x + I \mid I \text { is a nonzero ideal with }( x, I ) = R \}. \]
In this topology every maximal ideal is closed, so in a domain that is not a field with only finitely many maximal ideals $\mm_1,\ldots,\mm_n$, $R^{\times}$ is open and thus contains $1+I$ for some nonzero ideal $I$.  We get another proof of Theorem \ref{GOLOMB}.
\\ \indent
The Golomb topology is never Hausdorff: in fact $\overline{ \{0\} } = R$.  However, the induced topology on $R^{\bullet}$ can be (it is for $\Z$).  We leave further exploration to the reader.


\section{Connections With Ideal Theory}
\noindent
For a ring $R$, we denote by $\MaxSpec R$ the set of all maximal ideals of $R$.

\subsection{Comaximal Ideals}

\begin{lemma}
Let $\{I_n\}_{n=1}^{\infty}$ be a sequence of pairwise comaximal proper ideals in a ring $R$.  Then $\MaxSpec R$ is infinite.
\end{lemma}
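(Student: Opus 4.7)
The plan is to produce, for each $n$, a maximal ideal containing $I_n$, and then to use pairwise comaximality to show these maximal ideals are distinct.

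First, I would invoke the standard Zorn's Lemma argument that every proper ideal in a (nonzero) ring is contained in a maximal ideal. Since each $I_n$ is proper, pick $\mm_n \in \MaxSpec R$ with $I_n \subseteq \mm_n$. This produces a function $n \mapsto \mm_n$ from $\N$ to $\MaxSpec R$.

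Next, I would show this function is injective. Suppose for contradiction that $\mm_m = \mm_n$ for some $m \neq n$. Then $I_m + I_n \subseteq \mm_m + \mm_n = \mm_m \subsetneq R$, contradicting the pairwise comaximality assumption $I_m + I_n = R$. Hence all the $\mm_n$ are distinct, and $\MaxSpec R$ contains the infinite set $\{\mm_n\}_{n=1}^{\infty}$.

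There is no real obstacle here: the only nontrivial ingredient is the existence of a maximal ideal containing a given proper ideal, which is the standard Krull-type consequence of Zorn's Lemma and is routinely invoked at this level of exposition. The comaximality hypothesis then does all the remaining work essentially for free.
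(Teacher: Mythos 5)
Your proof is correct and is essentially identical to the paper's: both pick a maximal ideal $\mm_n \supseteq I_n$ for each $n$ (via the standard Zorn's Lemma fact) and note that $\mm_m = \mm_n$ for $m \neq n$ would force $R = I_m + I_n \subseteq \mm_m$, a contradiction. Nothing to add.
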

\begin{proof}
For $n \in \Z^+$, let $\mm_n$ be a maximal ideal containing $I_n$.  If for $n_1 \neq n_2$ we had
$\mm_{n_1} = \mm_{n_2}$ then $R = I_{n_1} + I_{n_2} \subset \mm_{n_1}$,
contradiction.
\end{proof}
\noindent
In particular, part a) of the Euclidean Criterion implies that a domain that is not a field and that satisfies Condition (E) has infinitely many maximal ideals.  Thus we get another proof of Theorem \ref{GOLOMB}....but by no means our last.

\subsection{Euclid Meets Jacobson}
\textbf{} \\ \\ \noindent
Now is the time to examine the more explicitly ideal-theoretic statement of Condition (E): for all nonzero ideals $I$, we have $1 + I \not \subset R$.  Some readers will now see -- or will have already seen -- the connection with the Jacobson radical, but we will not assume a prior familiarity.  In fact we will use the Euclidean Criterion to motivate a self-contained discussion of this and other ideal-theoretic concepts.


\begin{prop} \cite[Prop. 4.14]{Clark-CA}
\label{3.2}
For a ring $R$, let
\[ J(R) = \bigcap_{\mm \in \MaxSpec R} \mm, \]
the \textbf{Jacobson radical} of $R$.  For $x \in R$, the following are equivalent: \\
(i) $x \in J(R)$. \\
(ii) For all $y \in R$, $yx+1 \in R^{\times}$.
\end{prop}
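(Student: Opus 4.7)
The plan is to prove the biconditional directly, handling each implication in turn. The only nontrivial input is the standard Zorn's lemma fact that every proper ideal of $R$ is contained in some maximal ideal (equivalently, every nonunit lies in some maximal ideal); both directions are short once this is in hand.

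For the direction (i) $\Rightarrow$ (ii), I would argue by contradiction. Suppose $x \in J(R)$ but there exists $y \in R$ with $yx + 1 \notin R^{\times}$. Then $(yx+1)$ is a proper principal ideal, so it sits in some maximal ideal $\mm$. By the definition of $J(R)$ we have $x \in J(R) \subset \mm$, whence $yx \in \mm$, and then $1 = (yx+1) - yx \in \mm$, contradicting $\mm \subsetneq R$.

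For the direction (ii) $\Rightarrow$ (i), I would argue the contrapositive. Suppose $x \notin J(R)$, so there is some $\mm \in \MaxSpec R$ with $x \notin \mm$. Maximality of $\mm$ forces $\mm + (x) = R$, giving $m \in \mm$ and $y' \in R$ with $m + y'x = 1$. Setting $y = -y'$, we obtain $yx + 1 = m \in \mm$, which is a nonunit; this falsifies (ii).

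The main (and only) obstacle is philosophical rather than mathematical: one must invoke the existence of a maximal ideal containing any given proper ideal. Beyond that, the manipulations are entirely routine: in each direction one writes $1 = (yx+1) - yx$ and reads off the desired membership in $\mm$. No clever ring-theoretic trick is required, and I would expect to cite \cite[Prop. 4.14]{Clark-CA} or a comparable reference rather than reprove it in the paper.
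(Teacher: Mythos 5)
Your proof is correct and essentially identical to the paper's: both directions hinge on placing the relevant nonunit (respectively, the ideal $(\mm, x)$) against a maximal ideal and reading off $1 = (yx+1) - yx$. The only cosmetic difference is that you phrase (i) $\Rightarrow$ (ii) as a contradiction where the paper uses contraposition, and your sign bookkeeping in the second direction ($yx+1 = m$ rather than the paper's $-m$) is in fact the cleaner of the two.
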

\begin{proof}
(i) $\implies$ (ii): By contraposition: suppose there is $y \in R$ such that $z = yx+1 \notin R^{\times}$.  Then $z$ lies in some maximal ideal $\mm$.  If also $x \in \mm$, then $yx \in \mm$ and thus also $z-yx = 1 \in \mm$, contradiction.  So $x$ does not lie in $\mm$ and thus $x \notin J(R)$.  \\
(ii) $\implies$ (i): Again by contraposition: suppose that there is a maximal ideal $\mm$ such that $x \notin \mm$.  Then $\mm \subsetneq ( \mm,x )$, so $( \mm,x  ) = R$.  It follows that there is $m \in \mm$ and $y \in R$ such that $m+yx = 1$.  Thus $(-y)x + 1 = -m \in \mm$ so is not a unit.
\end{proof}
\noindent
We get immediately:

\begin{cor}
\label{3.33}
A ring $R$ satisfies Condition (E) iff $J(R) = (0)$.
\end{cor}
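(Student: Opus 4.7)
The plan is to deduce Corollary \ref{3.33} as a direct reformulation of Proposition \ref{3.2}, with essentially no new content beyond bookkeeping. First I would unwind the two statements so they can be compared term by term. Condition (E) reads: for every $x \in R^{\bullet}$ there exists $y \in R$ such that $yx+1 \notin R^{\times}$. Proposition \ref{3.2}(ii) reads: for every $y \in R$, $yx + 1 \in R^{\times}$. So Condition (E) at a particular $x \in R^{\bullet}$ is exactly the negation of Proposition \ref{3.2}(ii) at that same $x$.

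Next I would apply the equivalence (i) $\iff$ (ii) of Proposition \ref{3.2} in contrapositive form: the failure of (ii) at $x$ is equivalent to $x \notin J(R)$. Hence Condition (E) says precisely that every $x \in R^{\bullet}$ lies outside $J(R)$, i.e., $J(R) \cap R^{\bullet} = \varnothing$. Since $0 \in J(R)$ always, this is the same as $J(R) = (0)$.

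The only subtlety worth noting, though it is trivial, is that Condition (E) quantifies over $R^{\bullet}$ rather than all of $R$; but at $x = 0$ we have $yx + 1 = 1 \in R^{\times}$ for every $y$, so condition (ii) of Proposition \ref{3.2} is automatic there and contributes nothing to the equivalence. Thus there is no real obstacle: the proof is a one-line invocation of Proposition \ref{3.2} after translating Condition (E) into the statement ``$x \notin J(R)$ for all $x \neq 0$.''
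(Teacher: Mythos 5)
Your proposal is correct and is exactly the argument the paper intends: the corollary is stated as an immediate consequence of Proposition \ref{3.2}, obtained by negating condition (ii) pointwise and noting that $J(R)=(0)$ means $J(R)\cap R^{\bullet}=\varnothing$. Your extra remark about $x=0$ being vacuous is a harmless clarification, not a divergence.
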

\noindent
This gives a third proof of Theorem \ref{GOLOMB}: if $R$ has only finitely many maximal ideals $\mm_1,\ldots,\mm_n$, then
\[ J(R) = \bigcap_{i=1}^n \mm_i \supset \prod_{i=1}^n \mm_i \supsetneq \{0\}. \]
Apply Corollary \ref{3.33}.
\\ \\
A ring with zero Jacobson radical is called \textbf{semiprimitive}.\footnote{Or \textbf{Jacobson semisimple} or \textbf{J-semisimple}.}


\subsection{Some Questions and Some Answers}
\textbf{} \\ \\ \noindent
We now raise some natural questions...and answer them.

\begin{ques}
\label{EXTRAQUES}
In part b) of the Euclidean Criterion, must we assume that $R$ is a Furstenberg domain?
\end{ques}

\begin{ques}
\label{QUES2.4}
A semiprimitive domain, not a field, has infinitely many maximal ideals.  Must a domain with infinitely many maximal ideals be semiprimitive?  
\end{ques}

\begin{ques}
\label{QUES2.5}
Let $R$ be a Furstenberg domain.  \\
a) If $R$ is not semiprimitive, can it still have infinitely many atoms? \\
b) Can $R$ have finitely many maximal ideals and infinitely many atoms?
\end{ques}

\begin{example}
\label{EX4.7}
The ring $\overline{\Z}$ of all algebraic integers is \emph{not} a Furstenberg domain.  In fact it is an \textbf{antimatter domain}: there are no irreducibles whatsoever: if $z$ is an algebraic integer then so is $z^{1/2}$, so we can always factor $z = z^{1/2} z^{1/2}$.  Moreover $\overline{\Z}$ is not a field: for all integers $n \geq 2$, if $n \in \overline{\Z}^{\times}$ then $\frac{1}{n} \in \overline{\Z} \cap \Q = \Z$, contradiction.  \\ \indent If $I$ is a nonzero ideal of $\overline{\Z}$ then the constant coefficient of the minimal polynomial of a nonzero element $\alpha \in I$ is a nonzero integer in $I$.  It follows that if $J(\overline{\Z}) \neq 0$ then there is $N \in \Z^+$ that is contained in every $\mm \in \MaxSpec \overline{\Z}$.  Choose a prime number $p \nmid N$.
  Then $p$ is not  a unit in $\overline{\Z}$ -- otherwise $\frac{1}{p} \in \overline{\Z} \cap \Q = \Z$ -- so there is at least one maximal ideal $\mm_p$ of $\overline{\Z}$ containing $p$.  (In fact the set of maximal ideals of $\overline{\Z}$ containing $p$ has continuum cardinality.)  Then
  $\mm_p \supset ( N,p ) = \overline{\Z}$: contradiction.
\end{example}
  \noindent
  So the answer to Question \ref{EXTRAQUES} is \textbf{yes}: a semiprimitive domain that is not a field can have no irreducibles whatsoever.
\\ \\
The following result answers Questions \ref{QUES2.4} and \ref{QUES2.5} for Dedekind domains and shows that the Euclidean Criterion is, in principle, completely efficacious in determining whether a Dedekind domain has infinitely many atoms.

\begin{thm}
\label{3.4}  \label{EUCLIDJACOBSON}\textbf{} \\
For a Dedekind domain $R$ that is not a field, the following are equivalent: \\
(i) $R$ is semiprimitive.  \\
(ii) $R$ has infinitely many maximal ideals.  \\
(iii) $R$ has infinitely many atoms.
\end{thm}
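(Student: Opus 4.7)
My plan is to close the cycle (i) $\Rightarrow$ (iii) $\Rightarrow$ (ii) $\Rightarrow$ (i). Two of the three arrows are near-formal consequences of machinery already developed in the paper, and only one requires Dedekind-specific structure theory.

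The implication (i) $\Rightarrow$ (iii) is immediate from the Euclidean Criterion: a Dedekind domain is Noetherian and hence atomic by \cite[Prop. 15.3]{Clark-CA}, so in particular a Furstenberg domain; by Corollary \ref{3.33} semiprimitivity is exactly Condition (E); so Theorem \ref{BIGONE}b) produces an infinite sequence of pairwise comaximal irreducibles. Dually, (ii) $\Rightarrow$ (i) is handled by direct appeal to unique ideal factorization: if $0 \neq x \in J(R)$ and $(x) = \pp_1^{e_1} \cdots \pp_k^{e_k}$, then every maximal ideal $\mm$ contains $x$, hence contains $(x)$, and in a Dedekind domain ``to contain is to divide,'' so $\mm$ must equal some $\pp_i$. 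Thus $R$ has at most $k$ maximal ideals, contradicting (ii).

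For (iii) $\Rightarrow$ (ii) I argue by contrapositive, and this is where the real work lies. Assume $R$ has only finitely many maximal ideals $\pp_1, \ldots, \pp_n$; I claim $R$ is then a PID, so that the atoms of $R$ biject with its finite set of maximal ideals. For each $i$, invertibility of $\pp_i$ gives $\pp_i \neq \pp_i^2$, and the ideals $\pp_i^2, \pp_1, \ldots, \widehat{\pp_i}, \ldots, \pp_n$ are pairwise comaximal, so the Chinese Remainder Theorem yields an element $\pi_i \in \pp_i \setminus \pp_i^2$ with $\pi_i \notin \pp_j$ for $j \neq i$. Unique prime factorization of $(\pi_i)$ then forces $(\pi_i) = \pp_i$, so every maximal ideal is principal; since every nonzero ideal factors into primes, every ideal is principal. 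In a PID, irreducibles are prime and nonzero primes are maximal, so atoms biject with the (finitely many) maximal ideals of $R$.

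The main obstacle is this last step, specifically the classical fact that a semi-local Dedekind domain is a PID; once that is in hand, the Euclidean Criterion and unique ideal factorization dispatch the remaining implications cleanly.
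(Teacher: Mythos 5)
Your proof is correct and follows essentially the same route as the paper: the Euclidean Criterion (via Corollary \ref{3.33} and Theorem \ref{BIGONE}) gives semiprimitive $\Rightarrow$ infinitely many atoms, the fact that a nonzero element of a Dedekind domain lies in only finitely many maximal ideals gives (ii) $\Rightarrow$ (i), and the fact that a Dedekind domain with finitely many maximal ideals is a PID closes the loop. The only difference is cosmetic: you arrange the implications as a single cycle and supply a self-contained Chinese Remainder Theorem proof of the semi-local-Dedekind-implies-PID step, which the paper simply cites.
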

\begin{proof}
We know (i) $\implies$ (ii) in any domain. \\
(ii) $\implies$ (i): in a Dedekind domain, any nonzero element is contained in only finitely many maximal ideals.  So in fact for any infinite subset $M \subset \MaxSpec R$ we have $\bigcap_{\mm \in M} \mm = (0)$.  \\
(i) $\implies$ (iii): Dedekind domains are Noetherian, hence Furstenberg domains, so the Euclidean Criterion applies. \\
(iii) $\implies$ (i): By contraposition: a Dedekind domain with finitely many maximal ideals is a PID \cite[Thm. 20.6]{Clark-CA}, and in a PID there is no distinction between maximal ideals, principal ideals generated by prime elements, and atoms.
\end{proof}

\begin{ques}
Let $K$ be a number field, with ring of integers $\Z_K$.  The set of prime numbers is an infinite sequence of pairwise comaximal nonunits of $\Z_K$, so (as is well known!) $\Z_K$ has infinitely many prime ideals and thus is semiprimitive.  When $K = \Q$ or is imaginary quadratic, the finiteness of $\Z_K^{\times}$ leads to a \emph{direct verification} of Condition (E).  Is there a similarly direct verification for all $K$?
\end{ques}
\noindent
This is a question we will leave to the reader to address.

\begin{prop}
Let $R$ be a Noetherian domain of dimension at most one (nonzero prime ideals are maximal).  If $\MaxSpec R$ is infinite, then $R$ is semiprimitive and thus has infinitely many pairwise comaximal irreducibles.
\end{prop}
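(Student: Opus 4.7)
The plan is to generalize the proof of the implication (ii) $\Rightarrow$ (i) in Theorem \ref{EUCLIDJACOBSON} and then invoke the Euclidean Criterion as there. The key geometric fact is that in a Noetherian domain of dimension at most one, every nonzero element is contained in only finitely many maximal ideals; once this is established, the rest is immediate from Corollary \ref{3.33} and Theorem \ref{BIGONE}.

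First I would prove that finiteness. Fix $x \in R^{\bullet}$. Since $R$ is Noetherian, the ideal $(x)$ has only finitely many minimal primes, say $\pp_1,\ldots,\pp_k$. Each $\pp_i$ contains $x \neq 0$, so is a nonzero prime; by the dimension hypothesis, each $\pp_i$ is maximal. Now any maximal ideal $\mm$ containing $x$ contains some minimal prime $\pp_i$ over $(x)$, and since $\pp_i$ is already maximal, $\mm = \pp_i$. Thus $\{\mm \in \MaxSpec R : x \in \mm\} \subseteq \{\pp_1,\ldots,\pp_k\}$ is finite.

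Second, because $\MaxSpec R$ is infinite, for every $x \in R^{\bullet}$ there is some $\mm \in \MaxSpec R$ with $x \notin \mm$; hence $x \notin J(R)$. Therefore $J(R) = (0)$, i.e., $R$ is semiprimitive. By Corollary \ref{3.33}, $R$ satisfies Condition (E). Noetherian domains are atomic, hence Furstenberg, and $R$ is not a field because $\MaxSpec R$ has more than one element. Applying Theorem \ref{BIGONE}b), we obtain an infinite sequence of pairwise comaximal irreducibles in $R$.

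The only mild obstacle is the first step, and it is genuinely mild: it rests on the standard Noetherian fact that any ideal has finitely many minimal primes, combined with the dimension-one hypothesis collapsing those minimal primes to maximal ideals. No new machinery beyond what already appears in the proof of Theorem \ref{EUCLIDJACOBSON} is needed — in fact the present proposition is the natural relaxation of that theorem obtained by dropping integral closure.
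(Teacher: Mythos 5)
Your proof is correct, but it is organized differently from the paper's. The paper argues by contraposition in one stroke: if $J(R) \neq (0)$, then every maximal ideal of $R$ is a minimal prime of the Noetherian ring $R/J(R)$ (any smaller prime would be either $(0)$, which fails to contain $J(R)$, or a nonzero non-maximal prime, which the dimension hypothesis forbids), and a Noetherian ring has only finitely many minimal primes. You instead give a direct argument, applying the finiteness of minimal primes to each principal ideal $(x)$ rather than to $J(R)$, and thereby establish the stronger intermediate fact that every $x \in R^{\bullet}$ lies in only finitely many maximal ideals --- exactly the property invoked for Dedekind domains in the proof of (ii) $\implies$ (i) of Theorem \ref{EUCLIDJACOBSON}, now freed of the integral closure hypothesis. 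Both routes rest on the same engine (finiteness of minimal primes over an ideal in a Noetherian ring, plus dimension at most one); the paper's version is shorter, while yours isolates a reusable local finiteness statement and makes the parallel with the Dedekind case explicit. Your concluding steps (Corollary \ref{3.33}, atomic hence Furstenberg, not a field, then Theorem \ref{BIGONE}b)) match the paper's intent exactly.
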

\begin{proof}
If $R$ is not semiprimitive, then every maximal ideal $\mm$ of $R$ is a minimal prime ideal of $R/J(R)$.  Since $R$ is Noetherian, so is $R/J(R)$, and a Noetherian ring has only finitely many minimal prime ideals \cite[Thm. 10.13]{Clark-CA}.
\end{proof}
\noindent
A \textbf{Jacobson ring} is a ring in which every prime ideal is the intersection of the maximal ideals containing it.  Since in a domain $(0)$ is prime, a Jacobson domain must be semiprimitive.  Any quotient of a Jacobson ring is again a Jacobson ring.  If $R$ is a Jacobson ring and $S$ is a commutative, finitely generated $R$-algebra then $S$ is a Jacobson ring \cite[Thm. 12.15, 12.21]{Clark-CA}.  So:

\begin{thm}
\label{3.9}
a) A Jacobson Furstenberg domain that is not a field has infinitely many pairwise comaximal irreducibles. \\
b) Let $F$ be a field, and let $\pp$ be a prime but not maximal ideal of $F[t_1,\ldots,t_n]$.  Then the ring $R = F[t_1,\ldots,t_n]/\pp$ -- i.e., a coordinate ring of an integral affine variety of positive dimension -- has infinitely many pairwise comaximal irreducibles.  \\
c) A domain $R$ that is finitely generated over $\Z$ and not a field has infinitely many pairwise comaximal irreducibles.
\end{thm}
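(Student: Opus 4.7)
The plan is to chain together three facts already in our toolbox: a Jacobson domain is semiprimitive (since $(0)$ is a prime ideal, it equals the intersection of the maximal ideals containing it); by Corollary \ref{3.33}, semiprimitive is equivalent to Condition (E); and the Euclidean Criterion (Theorem \ref{BIGONE}b) applied to a Furstenberg domain satisfying Condition (E) that is not a field produces the required infinite sequence of pairwise comaximal irreducibles. So part a) is a one-line deduction once these ingredients are lined up.

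For part b), I would verify that $R = F[t_1,\ldots,t_n]/\pp$ satisfies the hypotheses of a). It is Jacobson because $F$ is Jacobson (a field vacuously so), the invoked stability result makes $F[t_1,\ldots,t_n]$ Jacobson, and quotients of Jacobson rings are Jacobson. It is Noetherian by the Hilbert Basis Theorem, hence atomic, hence Furstenberg. Finally, it is a domain because $\pp$ is prime and not a field because $\pp$ is not maximal. Apply a).

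For part c), the argument is parallel, starting from the observation that $\Z$ is a Jacobson ring: its nonzero primes are maximal and already self-intersecting, while $(0) = \bigcap_p (p)$ since $\Z$ has infinitely many primes. Then $R$, being finitely generated over $\Z$, is Jacobson by the same stability result, Noetherian (hence Furstenberg) by Hilbert Basis, and a domain not a field by hypothesis; apply a).

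There is no real obstacle here — the work has all been done in Proposition \ref{3.2}, Corollary \ref{3.33}, the Euclidean Criterion, and the cited stability properties of Jacobson rings. The only mild subtlety worth flagging is making sure we use that quotients of Jacobson rings are Jacobson in b), and that being a domain not a field (rather than just being nonzero) is what we need to invoke part a) in both b) and c); otherwise the proof is just a matter of assembling pieces in the right order.
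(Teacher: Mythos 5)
Your proposal is correct and is exactly the argument the paper intends: Jacobson $\Rightarrow$ semiprimitive (since $(0)$ is prime in a domain), semiprimitive $\Leftrightarrow$ Condition (E) by Corollary \ref{3.33}, and then Theorem \ref{BIGONE}b) applies to the Furstenberg (here, Noetherian hence atomic) domain; parts b) and c) follow from the stability of the Jacobson property under quotients and finitely generated algebras. The paper presents this as an immediate consequence of the preceding discussion, so there is nothing to add.
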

\noindent
To sum up: if we want to see a domain that has infinitely many maximal ideals but is not semiprimitive, it cannot be finitely generated over a field, and if Noetherian it must have a nonzero prime ideal that is not maximal.  This cues us up for the following example, 
which gives a negative answer to Question \ref{QUES2.4}.

\begin{example}
Consider the ring $\Z[[t]]$ of formal power series with integral coefficients.  It is not hard to show that $\Z[[t]]$ is an atomic domain.  In fact $\Z[[t]]$ is a Noetherian UFD  
 \cite[Thm. 15.32]{Clark-CA}.  Since $1+(t) \subset \Z[[t]]^{\times}$, the Jacobson radical $J(\Z[[t]])$ contains $(t)$ and is thus nonzero.  
Since $J(\Z[[t]])) \neq (0)$, the hypotheses of the Euclidean Criterion \emph{do not} apply.  Nevertheless
there are infinitely many pairwise comaximal prime elements, namely the prime numbers!  Hence there are infinitely many maximal
ideals. 
\\ \indent
Here we could have replaced $\Z$ with any PID with infinitely many maximal ideals.
\end{example}
\noindent
Thus the answer to Question \ref{QUES2.5}a) is \textbf{yes}: moreover a nonsemiprimitive domain can have infinitely many \emph{comaximal} irreducibles.

\begin{example}
Let $k$ be a field.  Recall that $k[x,y]$ is a UFD, and let $K = k(x,y)$ be its fraction field.  Let $R$ be the subring of $k(x,y)$ consisting of rational functions $\frac{f(x,y)}{g(x,y)}$ that, when written in lowest terms, have $g(0,0) \neq 0$.  Then $R$ is itself a UFD -- factorization in $R$ proceeds as in $k[x,y]$ except that the prime elements $p(x,y) \in k[x,y]$ such that $p(0,0) \neq 0$ become units in $R$ -- in which an element is a unit iff $f(0,0) \neq 0$.  Thus $\mm = \{ \frac{f(x,y)}{g(x,y)} \mid f(0,0) = 0\}$ is the unique
maximal ideal, so $J(R) = \mm$ and $R$ is very far from being semiprimitive.  Nevertheless it has infinitely many prime elements,
e.g. $\{y-x^n\}_{n=1}^{\infty}$.  In more geometric language, the irreducibles are the irreducible curves in the affine plane passing through $(0,0)$.
\end{example}
\noindent
Thus the answer to Question \ref{QUES2.5}b) is \textbf{yes}.  However, there is more to say.  The preceding example can be vastly generalized using the following striking result.


\begin{thm}(Cohen-Kaplansky \cite{Cohen-Kaplansky46})
\label{3.14} \textbf{} \\
Let $R$ be an atomic domain with finitely many atoms.  Then: \\
a) $R$ has only finitely many prime ideals. \\
b) $R$ is Noetherian. \\
c) Every nonzero prime ideal of $R$ is maximal.
\end{thm}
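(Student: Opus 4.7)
For part (a), I would define, for each nonzero prime $\mathfrak{p}$ of $R$, the subset $S_\mathfrak{p} := \{i : f_i \in \mathfrak{p}\} \subseteq \{1,\ldots,n\}$. Atomicity together with primality makes $S_\mathfrak{p}$ nonempty: a nonzero $x \in \mathfrak{p}$ factors as $u \prod_i f_i^{a_i}$ with $u \in R^\times$, so by primality some $f_i$ with $a_i > 0$ lies in $\mathfrak{p}$. The same observation shows $x \in (f_i : i \in S_\mathfrak{p})$, so in fact $\mathfrak{p} = (f_i : i \in S_\mathfrak{p})$, and hence $\mathfrak{p} \mapsto S_\mathfrak{p}$ is injective into the nonempty subsets of $\{1,\ldots,n\}$, bounding $|\operatorname{Spec}(R)|$ by $2^n$.

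For parts (b) and (c) I set $N := f_1 \cdots f_n$ and first prove the key lemma that every nonzero ideal $I$ of $R$ contains a power of $N$: given $0 \neq x \in I$ with $x = u \prod_i f_i^{a_i}$ and $k := \max_i a_i$, the element $N^k = x \cdot (u^{-1} \prod_i f_i^{k - a_i})$ lies in $(x) \subseteq I$. Consequently every nonzero prime contains $N$, so the primes of $R/(N)$ are precisely the nonzero primes of $R$. My plan is to deduce both (b) and (c) from the single claim that $R/(N)$ has finite length as an $R$-module. Indeed, (c) is then immediate, since in an Artinian ring every prime is maximal. For (b), the filtration of $R/(N^k)$ by powers of $N$ has successive quotients $(N^j)/(N^{j+1})$ that are modules over $R/(N)$, so each $R/(N^k)$ inherits finite length and is in particular Noetherian; any ideal $I$ of $R$ either equals $(0)$ or contains some $N^{k_0}$, and then finite generators of $I/(N^{k_0})$ lifted together with $N^{k_0}$ finitely generate $I$.

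The core technical obstacle is thus showing $R/(N)$ has finite length. Part (a) supplies a finite prime spectrum, so what remains is nilpotence of the Jacobson radical. I would approach this through the $R$-module filtration $R \supseteq (f_1) \supseteq (f_1 f_2) \supseteq \cdots \supseteq (N)$, whose successive quotients are $(f_1 \cdots f_i)/(f_1 \cdots f_{i+1}) \cong R/(f_{i+1})$ (the isomorphism being multiplication by $f_1 \cdots f_i$, which is injective since $R$ is a domain). This reduces the question to finite length of each $R/(f_j)$, which I would attempt by induction on the number $n$ of atoms, exploiting that every prime of $R/(f_j)$ corresponds to a prime of $R$ containing $f_j$, so the spectra in the recursion shrink. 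The principal delicacy is that atomic factorizations in $R$ need not be unique, so the recursion must be framed at the level of ideals rather than through elementwise divisibility; this is where I expect the real work to lie.
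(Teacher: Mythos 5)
Your part a) is correct and is essentially the paper's argument: every nonzero prime $\pp$ is generated by $\{f_i : f_i \in \pp\}$, so there are at most $2^n$ primes. Your key lemma for b) and c) -- that every nonzero ideal contains a power of $N = f_1 \cdots f_n$ -- is also correct, as are the two filtration reductions (finite length of $R/(N)$ reduces to finite length of each $R/(f_j)$, and finite length of $R/(N)$ would indeed yield both b) and c)). The problem is that the one claim carrying all the weight, namely that $R/(f_j)$ has finite length, is never proved, and the two routes you sketch toward it do not work. First, ``finite prime spectrum plus nilpotent Jacobson radical'' does \emph{not} imply finite length: the local ring $k[x_1,x_2,\ldots]/(\{x_ix_j\}_{i,j})$ has one prime and square-zero maximal ideal but is not Noetherian, so without already knowing $R/(N)$ is Noetherian -- which is part b), the thing you are deriving \emph{from} finite length -- nilpotence of the radical gets you nothing. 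Second, the proposed induction on the number of atoms has no ring to induct on: $R/(f_j)$ is not a domain, so the inductive hypothesis (about atomic domains with finitely many atoms) does not apply to it, and you give no substitute statement. Note also that finite length of $R/(N)$ already encodes part c) (zero-dimensionality), so this is not a technical loose end but the entire content of b) and c).

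For comparison, the paper closes exactly this gap with two standard but nontrivial inputs. For b): your part a) already shows every prime ideal is finitely generated (generated by a subset of $\{f_1,\ldots,f_n\}$), and Cohen's theorem (if every prime ideal is finitely generated then the ring is Noetherian) then gives Noetherianity outright -- no length computation needed. For c): if $(0) \subsetneq \pp_1 \subsetneq \pp_2$ were a chain of primes, then in a Noetherian domain a prime of height at least $2$ must contain infinitely many height-one primes (a consequence of Krull's principal ideal theorem plus prime avoidance), contradicting a). If you want to keep your finite-length framework, you would still need both of these facts, or genuinely new arguments replacing them, to show $R/(N)$ is Artinian.
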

\begin{proof}
a)  In an atomic domain $R$, whenever a prime ideal $\pp$ of $R$ contains a nonzero element $x$, we may factor $x = f_1 \cdots f_r$ into irreducibles and thus see that $\pp$ contains some irreducible element $f$ dividing $x$.  Thus, given any set of generators of a prime ideal $\pp$ we can replace it with a set of irreducible generators.  In a set of generators of an ideal, replacing each element by any one of its associates does not change the ideal generated, and thus if we have only finitely many nonassociate irreducibles we can only generate finitely many prime ideals. \\
b) It follows from the proof of part a) that every prime ideal of $R$ is finitely generated.  By a famous result of Cohen \cite[Thm. 4.26]{Clark-CA}, all ideals are finitely generated.  (This is an instance of the \textbf{prime ideal principle} of Lam-Reyes \cite{Lam-Reyes08}.) \\
c) If not, there are prime ideals $(0) \subsetneq \pp_1 \subsetneq \pp_2$.  Since $R$ is Noetherian, this implies there are infinitely many
prime ideals between $(0)$ and $\pp_2$ \cite[Cor. 8.46]{Clark-CA}.
\end{proof}
\noindent
A \textbf{Cohen-Kaplansky domain} is an atomic domain with finitely many atoms.  The work \cite{Cohen-Kaplansky46} does not give a complete classification: we are left with the case of a Noetherian domain $R$ with finitely many nonzero prime ideals, all of which are maximal.  If $R$ is a Dedekind domain, then by Theorem \ref{3.4} there are only finitely many atoms.  So the remaining case is when $R$ is not integrally closed in its fraction field, in which case the integral closure $\overline{R}$ is a Dedekind domain with finitely many prime ideals \cite[Cor. 18.8]{Clark-CA}.  One might expect that this forces $R$ to be Cohen-Kaplansky.  This need not be the case!

\begin{example}
\label{4.15}
Let $k$ be a field, and consider the subring
\[ R = k[[t^2,t^3]] = k + t^2 k[[t]] \]
of the formal power series ring $k[[t]]$.  
For  $0 \neq f = \sum_{n=0}^{\infty} a_n t^n \in k[[t]]$, we define $v(f)$ to be the least $n$ such that $a_n \neq 0$.  Then $v$ is a discrete valuation on $k[[t]]$, and the only nonzero prime ideal of $k[[t]]$ is
$(t) = \{f \in R \mid v(f) > 0\} \cup \{0\}$.  In particular, $k[[t]]$ is a PID.  So is the (isomorphic!) subring $k[[t^2]]$, 
and $\{1,t^3\}$ is a generating set for $R$ as a $k[[t^2]]$-module, so by standard PID structure theory, every ideal of $R$ can be generated by two elements.  Thus $R$ is Noetherian, hence atomic.
For $f = a_0 + \sum_{n=2}^{\infty} a_n t^n \in R$, we have $f \in R^{\times} \iff a_0 \neq 0$, and thus 
\[\mm = \{\sum_{n=2}^{\infty} a_n t^n \}  = ( t^2,t^3 )\]
is the unique maximal ideal of $R$.   We will give a complete description of the atoms of $R$.  First we claim that $f \in R$ is irreducible iff $v(f) \in \{2,3\}$.  Indeed a nontrivial factorization $f = xy$ involves
$v(x),v(y) \geq 2$ hence $v(f) \geq 4$; conversely, if $v(f) \geq 4$ then $f = t^2 \frac{f}{t^2}$ is a nontrivial factorization.  Since $k^{\times} \subset R^{\times}$, every irreducible is associate to one of the form

\[ t^2 + \sum_{n \geq 3} a_n t^n, \ (v(f) = 2 \text{ case}) \]
or one of the form
\[  t^3 + \sum_{n \geq 4} a_n t^n,  \ (v(f) = 3 \text{ case}).  \]
Associate elements have the same valuation, so certainly no irreducible of the first type is associate to an irreducible of the second type. We claim that $t^2 + \sum_{n \geq 3} a_n t^n$ is associate to $t^2 + \sum_{n \geq 3} b_n t^n$ iff $a_3 = b_3$ and $t^3 + \sum_{n \geq 3} a_n t^n$ is associate to $t^3 + \sum_{n \geq 3} b_n t^n$ iff $a_4 = b_4$.  This can be done by direct computation:
\[ (t^2 + a_3 t^3 + a_14 t^4 + a_5 t^5 + \ldots )(1 + u_2 t^2 + u_3 t^3 + \ldots) \]
 \[= t^2 + a_3 t^3 + (a_4 + u_2) t^4 + (a_5 + a_3 u_2 + u_3) t^5 + \ldots, \]
so $a_3 = b_3$ and there is a unique choice of $u_2,u_3,\ldots$ leading to $a_n = b_n$ for all $n \geq 4$.  The $v(f) = 3$ case is similar.  Thus there are precisely $2 \# k$ atoms, and $R$ is Cohen-Kaplansky iff $k$ is finite.
\end{example}

\begin{example}
(Anderson-Mott \cite[Cor. 7.2]{Anderson-Mott92}) For a prime power $q$ and $d,e \in \Z^+$, the ring $R = \F_q + t^e \F_{q^d}[[t]]$ is a Cohen-Kaplansky domain with exactly one nonzero prime ideal and exactly $e \frac{q^d-1}{q-1} q^{d(e-1)}$ irreducibles, none of which are prime unless $(d,e) = (1,1)$.
\end{example}
\noindent
The paper \cite{Cohen-Kaplansky46} was mostly forgotten for many years, until the breakthrough work of Anderson and Mott \cite{Anderson-Mott92} gave a complete characterization of Cohen-Kaplansky domains.  In fact they give $14$ characterizations!  Here is one:

\begin{thm}(Anderson-Mott \cite{Anderson-Mott92})
\\ For an atomic domain $R$, the following are equivalent: \\
(i) $R$ is a Cohen-Kaplansky domain. \\
(ii) $R$ is Noetherian of dimension at most one (nonzero prime ideals are maximal), has finitely many prime ideals, the integral closure $\overline{R}$ of $R$ is finitely generated as an $R$-module, $\# \MaxSpec \overline{R} = \# \MaxSpec R$, and for all nonprincipal ideals $\mm \in \MaxSpec R$,
$R/\mm$ is finite.
\end{thm}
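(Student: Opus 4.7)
The proof plan splits into the two implications, and both benefit enormously from reducing to the local case: since (ii) includes that $R$ is Noetherian of dimension at most one with only finitely many maximal ideals $\mm_1,\ldots,\mm_r$, and since Theorem \ref{3.14} already gives the same for (i), an atom $(f)$ of $R$ is contained in a unique $\mm_i$, and the set of atoms decomposes accordingly. Thus $R$ is Cohen-Kaplansky iff each localization $R_{\mm_i}$ is, and each of the module-theoretic conditions in (ii) localizes correctly (finite generation of $\overline{R}$, matching of maximal ideal counts, finiteness of residue fields at non-principal maximals). So it suffices to prove the theorem when $(R,\mm)$ is a one-dimensional Noetherian local domain; in this setting the condition $\#\MaxSpec \overline{R} = \#\MaxSpec R$ becomes ``$\overline{R}$ has a unique maximal ideal,'' so together with being integrally closed and Noetherian of dimension one we get that $\overline{R}$ is a DVR.

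For (i) $\implies$ (ii) in the local case, Theorem \ref{3.14} supplies Noetherian, dim $\leq 1$, and finitely many prime ideals. To obtain finite generation of $\overline{R}$ I would use that an atomic $R$ with finitely many atoms forces its integral closure to be reached in finitely many normalization steps; alternatively, one shows directly that every element of $\overline{R}$ is expressible in terms of inverses of the finite list of atoms. To obtain $\#\MaxSpec \overline{R} = 1$, assume for contradiction that $\overline{R}$ (a semi-local Dedekind domain, hence PID by \cite[Thm. 20.6]{Clark-CA}) has two maximal ideals $\mathfrak{P}_1, \mathfrak{P}_2$; using CRT in $\overline{R}$, produce for each $n$ an element $g_n \in R$ with $v_1(g_n) = 1$ and $v_2(g_n) = n$, and extract an infinite sequence of non-associate atoms from the atomic factorizations of $g_n$. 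To obtain finiteness of $R/\mm$ when $\mm$ is non-principal: if $R/\mm$ were infinite, I would construct irreducibles parameterized by distinct residue classes modulo $\mm$ by perturbing a fixed ``short'' atom (using two generators $a,b$ of $\mm$ with $b/a \notin R$) and controlling associate classes by their image in $\overline{R}/\mathfrak{c}$, where $\mathfrak{c}$ is the conductor.

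For (ii) $\implies$ (i) in the local case, let $\pi$ be a uniformizer of $\overline{R}$ and $v$ the valuation. Finite generation of $\overline{R}$ over $R$ forces the conductor $\mathfrak{c} = \{r \in R : r\overline{R} \subset R\}$ to be a nonzero ideal of both rings; as an ideal of the DVR $\overline{R}$, $\mathfrak{c} = \pi^N \overline{R}$ for some $N$. Two elements $f, g \in R^\bullet$ of equal $v$-value are associates in $R$ precisely when $f/g \in R^\times = R \cap \overline{R}^\times$; this reduces the enumeration of atoms to counting cosets of $R^\times$ inside $\overline{R}^\times$, which are detected by their image modulo $\mathfrak{c}$. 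If $\mm$ is principal, then $R = \overline{R}$ and $R$ is a DVR with a single atom. Otherwise, the hypothesis that $R/\mm$ is finite makes $R/\mathfrak{c}$ finite (since $\mathfrak{c}$ has finite codimension in $\mm$ inside $R$, $\mm$ being $\mm$-adically close to $\mathfrak{c}$), hence $\overline{R}/\mathfrak{c}$ is finite, so only finitely many coset classes of atoms can arise. A separate argument bounds the possible values of $v(f)$ for an atom $f$ in terms of $N$: any $f$ with $v(f) > N$ lies in $\mathfrak{c}\subset \pi\overline{R}\cdot \pi^{N-1}\overline{R}\cap R$ and factors nontrivially in $R$.

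The main obstacle is precisely this last bound combined with the fine-grained accounting of associate classes within each valuation level, which is where the principal-vs-non-principal dichotomy of $\mm$ becomes decisive and where finiteness of $R/\mm$ is indispensable. I would handle these technicalities by passing to the $\mm$-adic completion $\widehat{R} \hookrightarrow \widehat{\overline{R}}$ (a complete DVR), in which associate classes of atoms are cleanly parameterized by the finite quotient $\widehat{\overline{R}}^\times / (1 + \widehat{\mathfrak{c}}) \cdot \widehat{R}^\times$, and I would follow the framework of Anderson-Mott \cite{Anderson-Mott92} for the detailed bookkeeping that translates this into the stated characterization.
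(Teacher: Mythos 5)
First, there is nothing in the paper to compare your proposal against: the theorem is quoted from Anderson--Mott \cite{Anderson-Mott92} as one of their fourteen characterizations and the paper gives no proof, so your attempt stands or falls on its own. As it stands it is a roadmap rather than a proof. The architecture (reduce to the local case, identify $\overline{R}$ as a DVR, use the conductor $\mathfrak{c}=\pi^N\overline{R}$ to bound valuations of atoms and count associate classes via $\overline{R}^{\times}/R^{\times}$ modulo $\mathfrak{c}$) is indeed the shape of the Anderson--Mott argument, but the two hardest ingredients are asserted, not proved. (1) The local--global reduction ``$R$ is Cohen--Kaplansky iff each $R_{\mm_i}$ is'' is itself a theorem in \cite{Anderson-Mott92}; your justification --- that an atom lies in a unique maximal ideal --- is a nontrivial consequence of the hypotheses rather than a free observation (in a general one-dimensional semilocal domain it is false: $3$ is irreducible in $\Z[\sqrt{-5}]$ and lies in two distinct maximal ideals), and even granting it, you still must show irreducibles of $R$ stay irreducible in $R_{\mm}$ and that irreducibles of $R_{\mm}$ lift. (2) Module-finiteness of $\overline{R}$ over $R$ for a CK domain is one of the genuinely deep points (it is the question of analytic irreducibility); ``reached in finitely many normalization steps'' is a placeholder, not an argument. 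Your closing sentence defers ``the detailed bookkeeping'' to \cite{Anderson-Mott92}, which makes the proposal circular as a proof of a theorem attributed to that very paper.

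There is also a concrete false step: you claim any $f\in R$ with $v(f)>N$ factors nontrivially in $R$. The paper's own Example \ref{4.15} refutes this: for $R=k[[t^2,t^3]]$ the conductor is $t^2k[[t]]$, so $N=2$, yet $t^3$ is irreducible with $v(t^3)=3>N$. The correct bound is roughly $v(f)\leq 2N-1$: if $v(f)\geq 2N$ one writes $f=(\pi^N u)(\pi^{v(f)-N})$ with both factors lying in $\mathfrak{c}\subset \mm$. Similarly, your argument that $\#\MaxSpec\overline{R}>\#\MaxSpec R$ forces infinitely many atoms produces elements $g_n$ by CRT \emph{in $\overline{R}$} but does not explain how to land them in $R$ (one needs the conductor to be nonzero first --- i.e., point (2) above --- and must then correct the valuations by conductor elements), nor how to extract infinitely many pairwise non-associate irreducible factors \emph{of $R$} from them. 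None of these gaps is unfillable --- they are filled in \cite{Anderson-Mott92} --- but as written the proposal does not constitute a proof.
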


\begin{example}
Let $k$ be a field of characteristic different from $2$ or $3$, and consider: \\
$\bullet$ $R_1$: the localization of $k[x,y]/(y^2 - x^3 - x)$ at $\mm_0 = ( x,y )$. \\
$\bullet$ $R_2$: the localization of $k[x,y]/(y^2-x^3-x^2)$ at $\mm_0 = ( x,y )$.  \\
$\bullet$ $R_3$: the localization of $k[x,y]/(y^2-x^3)$ at $\mm_0 = ( x,y )$. \\
Then:
\\
$\bullet$ $R_1$ is always Cohen-Kaplansky (it is a Dedekind domain with one maximal ideal). \\
$\bullet$ $R_2$ is never Cohen-Kaplansky ($ \# \MaxSpec \overline{R_2} = 2 > 1 = \# \MaxSpec R_2$). \\
$\bullet$ $R_3$ is Cohen-Kaplansky iff $k$ is finite.
\end{example}

\subsection{Euclid Beyond Atomicity}
\textbf{} \\ \\ \noindent
In the case of an atomic domain, the part of the Euclidean Criterion that yields infinitely many maximal ideals is much weaker than the Cohen-Kaplansky Theorem.  However, there is life beyond atomic domains.

\begin{example}
\label{EXAMPLE4.19}
Let $\Hol(\C)$ be the ring of entire functions $f: \C \ra \C$.  For $f \in \Hol(\C)$, put $Z(f) = \{z \in \C \mid f(z) = 0\}$.  
 If $f,g \in \Hol(\C)^{\bullet}$, then $Z(f)$ and $Z(g)$ are countable sets, hence so is $Z(fg) = Z(f) \cup Z(g)$, so $fg \neq 0$.  Thus $H(\C)$ 
is a domain.  The map $z_0 \in \C \mapsto ( z - z_0 )$ gives a bijection from $\C$ to the atoms of $\Hol(\C)$.  An element $f \in \Hol(\C)$ 
is a unit iff $Z(f) = \varnothing$, and a nonzero nonunit $f$ is a (finite!) product of atoms iff $Z(f)$ is finite and nonempty.
\\ \indent So $\Hol(\C)$ is not atomic -- consider e.g. $f(z) = \sin z$ -- but it is Furstenberg: if $f$ is a nonzero nonunit, then $f$ vanishes at some $z_0 \in \C$ and thus is divisible by the irreducible element $z-z_0$.  Moreover $\Hol(\C)$ satisfies Condition (E): 
if $f \in \Hol(\C)^{\bullet}$ then there is $w \in \C$ such that $f(w) \neq 0$.  Let $g = z - w - \frac{1}{f(w)}$.  Then $(gf + 1)(w) = 0$, 
so $gf+1 \notin \Hol(\C)^{\times}$.   Thus the Euclidean Criterion applies in $\Hol(\C)$.
\end{example}


\begin{thm}
\label{LASTTHM}
Let $1 \leq \alpha \leq \beta \leq \gamma$ be cardinal numbers.  There is a domain $R$ satisfying all of the following properties: \\
(i) $R$ is a B\'ezout domain: every finitely generated ideal is principal.  \\
(ii) $R$ has exactly $\alpha$ atoms, each of which is a maximal ideal. \\
(iii) $R$ has exactly $\beta$ maximal ideals. \\
(iv) $R$ has exactly $\gamma$ nonzero prime ideals.  \\
(v) $R$ is an atomic domain iff $\alpha = \beta = \gamma < \aleph_0$. \\
(vi) $R$ is a Furstenberg domain iff $\alpha = \beta$. \\
(vii) $R$ is semiprimitive iff $\beta \geq \aleph_0$.
\end{thm}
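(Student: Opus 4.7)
My plan is to construct $R$ via the Krull--Kaplansky--Jaffard--Ohm realization theorem: every lattice-ordered abelian group $G$ occurs as the divisibility group $K^\times/R^\times$ of some B\'ezout domain $R$. Under this correspondence, atoms of $R$ match atoms (minimal positive elements) of $G_+$, prime ideals of $R$ match the convex $\ell$-subgroups of $G$ with totally ordered quotient, and maximal ideals match the maximal such. It thus suffices to engineer a lattice-ordered group $G$ with the correct combinatorics.

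I would fix an index set $\Lambda$ of cardinality $\beta$ and partition it as $\Lambda = \Lambda_a \sqcup \Lambda_n$ with $|\Lambda_a| = \alpha$, distributing the $\gamma - \beta$ ``extra'' primes among the factors. For each $\lambda \in \Lambda_a$ I would take $G_\lambda$ totally ordered of the form $H_\lambda \oplus_{\mathrm{lex}} \mathbb{Z}$ with the $\mathbb{Z}$-summand at the bottom, so that its unique minimal positive element $\tau_\lambda$ generates the maximal filter; here $H_\lambda$ is chosen to contribute the prescribed number of convex subgroups, equivalently of non-maximal primes lying below $\tau_\lambda$. For each $\lambda \in \Lambda_n$ I would take $G_\lambda$ totally ordered with no minimum positive element (say, a divisible group) and the prescribed rank. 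The lattice-ordered group $G$ will then sit inside $\prod_\lambda G_\lambda$ under the componentwise order: for finite $\beta$ I take $G = \bigoplus_\lambda G_\lambda$, while for infinite $\beta$ I enlarge $G$ to include carefully chosen elements of infinite but not full $\Lambda$-support. Applying the realization theorem yields a B\'ezout domain $R$ with $G(R) \cong G$.

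For the verifications, (i) is immediate, and (ii)--(iv) translate directly: the $\alpha$ atoms of $G$ are the $\tau_\lambda$ for $\lambda \in \Lambda_a$, each generating a principal maximal ideal; the remaining $\beta - \alpha$ non-principal maximals correspond to the $\lambda \in \Lambda_n$; and the $\gamma - \beta$ non-maximal nonzero primes arise from non-trivial proper convex subgroups in the $G_\lambda$'s. For (v), $R$ is atomic iff every $g \in G_+$ is a finite sum of atoms, which happens iff each $G_\lambda \cong \mathbb{Z}$ and $\Lambda$ is finite. For (vi), $R$ is Furstenberg iff every nonzero $g \in G_+$ dominates some $\tau_\lambda$, which fails precisely on elements supported in $\Lambda_n$, so Furstenberg is equivalent to $\Lambda_n = \varnothing$. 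For (vii), $J(R)$ corresponds to positive elements of $G$ of full $\Lambda$-support; the group $G$ will be engineered so that such elements exist exactly when $\beta$ is finite, in line with Theorem \ref{GOLOMB}.

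The main obstacle will be arranging (v) and (vii) simultaneously in the infinite case: for $\alpha = \beta = \gamma \geq \aleph_0$ the divisibility group must admit some positive element of infinite support (to forbid atomicity) while containing no positive element of full $\Lambda$-support (to force semiprimitivity). Example \ref{EXAMPLE4.19} -- the ring $\Hol(\mathbb{C})$ -- exhibits exactly this non-atomic-yet-semiprimitive behavior in a B\'ezout setting, but with the wrong cardinal profile, since its atoms have cardinality $|\mathbb{C}|$ and its maximal spectrum is strictly larger. The delicate step will be to tune the support structure on $G$ so that the realization theorem delivers a domain matching the prescribed triple $(\alpha,\beta,\gamma)$ exactly while preserving B\'ezoutness.
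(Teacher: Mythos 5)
Your overall strategy is the paper's: realize a suitably engineered lattice-ordered group as the group of divisibility of a B\'ezout domain via Kaplansky--Jaffard--Ohm, with the same dictionary (atoms of $G$ $\leftrightarrow$ atoms of $R$, components $\leftrightarrow$ maximal ideals, convex subgroups of the components $\leftrightarrow$ the primes below the corresponding maximal ideal). Where you diverge is at what you call the ``main obstacle,'' and there the proposal goes wrong. You assert that when $\alpha=\beta=\gamma\geq\aleph_0$ the only way to defeat atomicity is to adjoin to $\bigoplus_\lambda G_\lambda$ positive elements of infinite (but not full) support. This is false, and it is the one place where the construction actually needs an idea: a totally ordered group can have a least positive element without being isomorphic to $\Z$. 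The paper takes a single component $G_0=\prod_{\eta}\Z$ lexicographically ordered over a successor ordinal $\eta$; its least positive element (supported on the top coordinate) makes that component Furstenberg, while $G_0\not\cong\Z$ kills atomicity, and the proper convex subgroups of $G_0$ supply exactly the $\#\eta$ extra primes needed for (iv). Everything stays inside the cardinal sum, so every positive element has finite support, the domain is h-local, $\MaxSpec R$ is indexed by the components, and $J(R)=(0)$ iff the index set is infinite --- (iii), (iv) and (vii) come for free.

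Your proposed enlargement, by contrast, does not merely make the verification ``delicate''; it breaks (iii). If $G$ contains a positive element $x$ whose support $S\subset\Lambda$ is infinite, then for any nonprincipal ultrafilter $\mathcal{U}$ on $\Lambda$ containing $S$, the set $\{r\in R^{\bullet} : \{\lambda : v_\lambda(r)>0\}\in\mathcal{U}\}\cup\{0\}$ is a nonzero prime ideal (supports shrink under addition to the intersection and grow under multiplication to the union), and it is not contained in any of the coordinate maximal ideals $\mm_\lambda$, since $\mathcal{U}$ being nonprincipal provides elements of the ideal outside each $\mm_\lambda$. So $\MaxSpec R$ acquires at least $2^{2^{\#S}}$ new maximal ideals and the count $\beta$ is lost. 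Relatedly, your stated equivalence ``$R$ atomic iff each $G_\lambda\cong\Z$ and $\Lambda$ is finite'' is not the right criterion for the cardinal sum: $\bigoplus_\Lambda\Z$ is the divisibility group of a UFD for every $\Lambda$, and the correct statement (which the paper uses) is that $R$ is atomic iff each $G_\lambda\cong\Z$, with no finiteness condition. Replacing one component by a lexicographic $\Z^\eta$ is the fix; the rest of your verification of (i), (ii), (vi), (vii) then goes through essentially as you wrote it.
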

\noindent
We postpone the proof of Theorem \ref{LASTTHM} in order to discuss its significance.  By taking $\alpha = \beta$  and $\gamma \geq \aleph_0$ we get Furstenberg domains with any number $\alpha \geq 1$ of irreducibles and any number $\gamma \geq \max(\alpha,\aleph_0)$ nonzero prime ideals.  In particular, a Furstenberg domain can have any finite, positive number of irreducibles and any infinite number of prime ideals, so the Cohen-Kaplansky Theorem does not extend from atomic domains to Furstenberg domains.  For any $\alpha = \beta \geq \aleph_0$ and $\gamma \geq \alpha$ we get a semiprimitive Furstenberg domain that is not an atomic domain.
\\ \\
Now we come to the proof of Theorem \ref{LASTTHM}, which requires somewhat more specialized results.  A completely self-contained presentation would require more space than we want to devote here.  So we will make use of the material of  \cite[Ch. II and III]{Fuchs-Salce}, and our treatment will be at the level of a detailed sketch.  \\
\indent Let $R$ be a domain with fraction field $K$.  To $x \in K^{\bullet}$ we attach the \textbf{principal fractional ideal} 
$(x) = \{ax \mid a \in R\}$.  When $x \in R$, this coincides with the usual notion of a principal ideal.  For $x,y \in K^{\bullet}$ we have $(x) = (y)$ iff there is $u \in R^{\times}$ 
such that $y = ux$.  The principal fractional ideals of $K$ form a commutative group under pointwise multiplication: we have 
$(x)(y) = (xy)$.  We call this the \textbf{group of divisibility} of $R$ and denote it $G(R)$.  It is partially ordered by reverse 
inclusion: that is, for $x,y \in K^{\bullet}$ we put $(x) \leq (y)$ iff $(y) \supset (x)$.  This order reversal is actually rather familiar: 
for $x,y \in K^{\times}$, we write $x \mid y \iff \frac{y}{x} \in R$, and then we have $x \mid y$ if  $(x) \supset (y)$: \emph{to contain is 
to divide}. \\ \indent
 Let $\{G_i\}_{i \in I}$ be an indexed family of nonzero totally ordered commutative groups, and let $G = \bigoplus_{i \in I} G_i$ be the direct sum endowed with the pointwise partial ordering: $x \leq y$ iff
$x_i \leq y_i$ for all $i \in I$.  Let $\pi_i: G \ra G_i$ be projection onto the $i$th coordinate.  By the Kaplansky-Jaffard-Ohm Theorem \cite[Thm. III.5.3]{Fuchs-Salce} there is a B\'ezout domain $R$ and an isomorphism $\varphi: G(R) \stackrel{\sim}{\ra} G$ of partially ordered commutative groups.  See \cite[Example III.5.4]{Fuchs-Salce}.  Let $v$ be the composite $K^{\times} \ra K^{\times}/R^{\times} \stackrel{\varphi}{\ra} \bigoplus_{I \in I} G_i$.  Then the maximal ideals of $R$ are precisely $\mm_i = \{ x \in R \mid (\pi_i \circ v)(x) > 0\} \cup \{0\}$ for $i \in I$.  Thus no element of $R^{\bullet}$ lies in infinitely many maximal ideals, so $R$ is semiprimitive iff $I$ is infinite.  \\ \indent
An \textbf{atom} in a partially ordered commutative group is a minimal positive element.  This is a direct generalization of our 
previous use of the term: if $R$ is a domain, the minimal positive elements of the group of divisibility $G(R)$ are precisely 
the principal fractional ideals $(x)$ for an irreducible element $x \in R$.  For every atom $x \in G$, there is $i \in I$ such that $x_i$ is an atom of $G_i$ and $x_j = 0$ for all $j \neq i$, and conversely all such elements give atoms of $G$.  Since each $G_i$ is totally ordered, it has at most one atom, the \emph{least} positive element of $G_i$ if such an element exists.  It follows that $R$ is Furstenberg iff each $G_i$ has a least positive element.  Similarly, a nonzero nonunit $x \in R$ factors into irreducibles iff
$v(x) \in G$ is a sum of atoms iff for all $i \in I$, $G_i$ has a least positive element $a_i$ and $v_i(r) = n a_i$ for some $n \in \Z^+$.  Thus $R$ is an atomic domain iff each $G_i \cong \Z$.  \\ \indent
The domain $R$ is \textbf{h-local}: each nonzero prime ideal is contained in a unique maximal ideal \cite[\emph{loc. cit.}]{Fuchs-Salce}.  The nonzero prime ideals contained in $\mm_i$ correspond bijectively to the proper convex subgroups of $G_i$.  (A subset $Y$ of a totally ordered set $X$ is convex if
for all $x < y < z \in X$, if $x,z \in Y$ then also $y \in Y$.)  We will take each $G_i$ to be a lexicographic product of copies of subgroups of $(\R,+)$ indexed by an ordinal $\eta$.  Then the convex subgroups of of $G_i$ are precisely $\{H_{\delta}\}_{0 \leq \delta \leq \eta}$, where $H_{\delta}$ is the set of all elements of $G_i$ with $j$-coordinate zero for all $j < \delta$.  So there are $\# \eta$ nonzero prime ideals in $\mm_i$.
\\ \indent
We will take a family of nonzero totally ordered commutative groups $G_i$ parameterized by $i \in \beta$: this gives us $\beta$ maximal ideals, and $R$ is semiprimitive iff $\beta \geq \aleph_0$.  We are left to choose the groups $G_i$ in terms of $\alpha$ and $\gamma$ so as to attain the other assertions.  We define an ordinal $\eta$: if $\gamma$ is finite, it is the positive integer
$\gamma-\beta + 1$; if $\gamma$ is infinite, it is the successor ordinal to $\gamma$ (what matters in this case is that $\eta$ is a well-ordered set of cardinality $\gamma$ and with a largest element).  There are cases: \\
$\bullet$ If $\alpha = \beta = \gamma < \aleph_0$, we take $R$ to be a PID with $\gamma$ nonzero prime ideals.  \\
$\bullet$ If $\alpha = \beta$ and $\gamma \geq \min(\beta+1,\aleph_0)$ we take $G_i = \Z$ for all $0 < i \in \beta$.  We take $G_0$ to be the Cartesian product of copies of $\Z$ indexed by $\eta$, endowed with the lexicographic ordering.  Then $G_0$ has a least positive element: the element that is $0$ in all factors but the last and $1$ in the last factor.  So all $G_i$ have least elements and $R$ is a Furstenberg domain.  Moreover $\eta \geq 2$ so $G_0 \not \cong \Z$ and $R$ is not an atomic domain.  It has $(\beta-1) + \# \eta = \gamma$ nonzero prime ideals.  \\
$\bullet$ If $\alpha < \beta$, we take $G_0$ to be the Cartesian product of copies of $\Z$ indexed by $\eta$, for $1 \leq i < \alpha$ we take $G_i = \Z$, and for $i \geq \alpha$ we take $G_i = \R$.

\subsection{Supplement: Rings With Infinitely Many Maximal Ideals}
\textbf{} \\ \\ \noindent
  Let us briefly consider the case of an arbitrary commutative ring.  Though others have done so (see e.g. \cite{AVL96}), it is beyond our ambitions to pursue a factorization theory in the presence of zero divisors.  But we can still ask for criteria under which there are infinitely many maximal ideals.  In this more general context $J(R) = (0)$ is no longer sufficient: e.g. $J(\C \times \C) = 0$ and there are only two maximal ideals.  Nevertheless both Euclid and Jacobson have a role to play.

\begin{prop}
\label{3.3} \cite[Prop. 4.15]{Clark-CA}
Let $I$ be an ideal of $R$ contained in the Jacobson radical.  Then for all $x \in R$, if the image of $x$ in $R/I$ is a unit, then $x$ is a unit.  In particular the natural map $R^{\times} \ra (R/I)^{\times}$ is surjective.
\end{prop}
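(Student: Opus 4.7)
The plan is to reduce the claim to the characterization of the Jacobson radical already supplied by Proposition~\ref{3.2}, namely that $z\in J(R)$ iff $yz+1\in R^\times$ for every $y\in R$. In particular, taking $y=1$ in that proposition, every element of $J(R)$ has the form $u-1$ for some $u\in R^\times$; equivalently, $1+J(R)\subset R^\times$. This is the only ingredient I will need.

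First I would unpack the hypothesis. Suppose $\bar x\in (R/I)^\times$, and choose any $y\in R$ whose image in $R/I$ inverts $\bar x$. Then $xy-1\in I$, and the hypothesis $I\subset J(R)$ upgrades this to $xy-1\in J(R)$. By the consequence of Proposition~\ref{3.2} recalled above, $xy = 1+(xy-1)\in R^\times$. So there exists $u\in R$ with $(xy)u=1$; but then $x\cdot(yu)=1$, which exhibits $x$ as a unit of $R$. This proves the main assertion.

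For the ``in particular'' statement, note that the canonical map $R^\times\to(R/I)^\times$ is well-defined (units map to units), and the argument just given shows that \emph{every} lift of a unit of $R/I$ is already a unit of $R$, so the map is surjective.

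There is no real obstacle here beyond locating the correct case of Proposition~\ref{3.2}; the content has already been packaged. Two small points worth explicitly flagging during the write-up: (i) $J(R)$ is an ideal, so closure under negation is used tacitly to pass from $xy-1\in J(R)$ to the usable form $1+(xy-1)$; (ii) commutativity of $R$ is what lets ``$x$ has a right inverse'' conclude ``$x$ is a unit'', which is in force throughout the paper's standing conventions.
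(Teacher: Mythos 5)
Your proof is correct and follows essentially the same route as the paper: both lift an inverse of $\bar x$ to get $xy-1\in I\subset J(R)$ and then conclude that $x$ is a unit. The only cosmetic difference is that you invoke Proposition \ref{3.2} (in the packaged form $1+J(R)\subset R^\times$) to see that $xy\in R^\times$, whereas the paper unwinds the definition of $J(R)$ on the spot, noting that $xy-1$ lies in every maximal ideal $\mm$ and hence $x$ lies in none; these are the same argument in two wrappings.
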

\begin{proof}
If the image of $x$ in $R/I$ is a unit, then there is $y \in R$ such that $xy \equiv 1 \pmod{I}$, i.e.,
$xy-1 \in I \subset J(R)$.  Thus for every maximal ideal $\mm$ of $R$, $xy-1 \in \mm$ so we cannot have $x \in \mm$.  So $x$ lies in no maximal ideal of $R$ and thus $x \in R^{\times}$.
\end{proof}

\begin{thm}(Dubuque \cite{Dubuque10})
\label{3.12}
Let $R$ be an infinite ring.  If $\# R > \# R^{\times}$, then $\MaxSpec R$ is infinite.
\end{thm}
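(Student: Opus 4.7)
The plan is to prove the contrapositive: assuming $R$ has only finitely many maximal ideals $\mm_1,\ldots,\mm_n$, I will show $\#R^{\times}=\#R$. The two main ingredients are the Chinese Remainder Theorem and Proposition \ref{3.3}. Because distinct maximal ideals are always comaximal, CRT gives $R/J(R)\cong\prod_{i=1}^n R/\mm_i$, a finite product of fields, and hence $(R/J(R))^{\times}\cong\prod_{i=1}^n (R/\mm_i)^{\times}$. Proposition \ref{3.3} supplies a surjection $R^{\times}\to (R/J(R))^{\times}$; its kernel is $\{u\in R^{\times}:u-1\in J(R)\}$, and since Proposition \ref{3.2} (with $y=1$) yields $1+J(R)\subset R^{\times}$, the kernel equals $1+J(R)$, which has the same cardinality as $J(R)$ via translation. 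Therefore
\[ \#R^{\times}=\#J(R)\cdot \#(R/J(R))^{\times}, \qquad \#R=\#J(R)\cdot \#(R/J(R)), \]
the second equality simply because $R$ partitions into $\#(R/J(R))$ cosets of the additive subgroup $J(R)$.

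It remains to equate these two products, and this is a short cardinal arithmetic in two cases. If $R/J(R)$ is finite, then $J(R)$ must be infinite (since $R$ is), and both products reduce to $\#J(R)$, using that $(R/J(R))^{\times}$ is a nonempty finite set containing $1$. If $R/J(R)$ is infinite, then as a finite product of fields it must have at least one infinite factor $R/\mm_i$; for an infinite field $F$ we have $\#F^{\times}=\#F$, and a standard product-of-cardinals computation then gives $\#(R/J(R))^{\times}=\#(R/J(R))$. Either way $\#R^{\times}=\#R$, completing the contrapositive.

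The main (and rather modest) obstacle is the structural identity $\#R^{\times}=\#J(R)\cdot \#(R/J(R))^{\times}$: one must recognize that units modulo $J(R)$ lift to genuine units via Proposition \ref{3.3}, and that the fibers of this lifting are precisely cosets of $1+J(R)$. This is the point at which Jacobson meets Euclid in the non-domain setting, and it makes essential use of the fact that $1+J(R)\subset R^{\times}$. After that, the argument is routine cardinal bookkeeping, with only the case split on whether $R/J(R)$ is finite or infinite requiring any thought.
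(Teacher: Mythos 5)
Your proof is correct, and it takes a genuinely different (and in some ways cleaner) route than the paper's. The paper proves the theorem by induction on the number of maximal ideals: given $\mm_1,\ldots,\mm_n$ it forms $I=\prod_i\mm_i$ and splits into two cases, deriving a cardinality contradiction when $1+I\subset R^{\times}$ and otherwise extracting an $(n+1)$st maximal ideal from a nonunit in $1+I$ -- a Euclid-flavored argument in which the case split is forced because at each stage only \emph{some} of the maximal ideals are in hand. You instead prove the contrapositive in one shot: with \emph{all} the maximal ideals assumed present, $J(R)=\bigcap_i\mm_i$ and the inclusion $1+J(R)\subset R^{\times}$ is automatic from Proposition \ref{3.2}, so no dichotomy is needed; the surjection $R^{\times}\to(R/J(R))^{\times}$ of Proposition \ref{3.3} then has kernel exactly $1+J(R)$, giving the exact identity $\#R^{\times}=\#J(R)\cdot\#(R/J(R))^{\times}$, which you match against $\#R=\#J(R)\cdot\#(R/J(R))$ by the CRT decomposition into fields. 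Both proofs lean on the same two ingredients (CRT and unit lifting modulo the Jacobson radical), but where the paper runs the cardinality computation as a contradiction inside an induction, you run it as an exact equality once and for all; what your approach buys is the elimination of the induction and the case split, at the modest cost of a two-case piece of cardinal arithmetic at the end (which you handle correctly, including the edge cases $J(R)=(0)$ and residue fields like $\F_2$ with trivial unit group, since all factors in your products are at least $1$).
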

\begin{proof}
We will show by induction on $n$ that for all $n \in \Z^+$, $R$ has $n$ maximal ideals. \\
Base Case: Since $R$ is infinite, it is nonzero and thus it has a maximal ideal $\mm_1$.  \\
Induction Step: Let $\mm_1,\ldots,\mm_m$ be maximal ideals, and put
\[ I = \prod_{i=1}^m \mm_i. \]
Case 1: Suppose $I+1 \subset R^{\times}$.  Then $\# I \leq \# R^{\times}$.  Moreover
$I \subset J(R)$, so by Proposition \ref{3.3} $R^{\times} \ra (R/I)^{\times}$  is surjective.
It follows
that $\# (R/I)^{\times} \leq \# R^{\times} < \# R$: by the Chinese Remainder Theorem, $R/I \cong \prod_{i=1}^n R/\mm_i$, hence there is an injection $(R/\mm_i)^{\times} \ra (R/I)^{\times}$.   Putting the last two sentences together we conclude $\# (R/\mm_i)^{\times} < \# R$, and thus, since $R/\mm_i$ is a field and $R$ is infinite, $\# R/\mm_i = \# (R/\mm_i)^{\times} + 1 < \# R$.  Finally
this gives the contradiction
\[ \# R = \# I \cdot \# R/I = \# I \cdot \prod_{i=1}^n \# R/\mm_i < (\# R)^{n+1} = \# R. \]
Case 2: So there is $x \in I+1 \setminus R^{\times}$. Let $\mm_{n+1}$ be a maximal ideal containing $x$.  For all $1 \leq i \leq n$ we have $x-1 \in I \subset \mm_i$, so
\[ 1 = x +(1-x) \in \mm_{n+1} + \mm_i. \]
 So $\mm_{n+1}$ is an $(n+1)$st maximal ideal of $R$, completing the induction step.
\end{proof}
\noindent
A special case of Theorem \ref{3.12} appears in \cite[$\S$ 1.1, Exc. 8]{Kaplansky70}. 
 \\ \\
For a ring $R$, consider the quotient $R/J(R)$.  The maximal ideals of $R/J(R)$ correspond to the maximal ideals of $R$ containing $J(R)$ -- that is, to the maximal ideals of $R$.  Thus $R/J(R)$ is semiprimitive.  Thus we can replace any ring with a semiprimitive ring without changing its $\MaxSpec$.  However this ``Jacobson semisimplification'' need not carry domains to domains: e.g. if $R$ is a domain with $2 \leq n < \aleph_0$ maximal ideals $\mm_1,\ldots,\mm_n$, then $R/J(R) \cong \prod_{i=1}^n R/\mm_i$.  Here is a generalization.


\begin{thm}
a) For a ring $R$, the following are equivalent. \\
(i) $R$ has only finitely many maximal ideals.  \\
(ii) $R/J(R)$ is a finite product of fields. \\
(iii) $R/J(R)$ has only finitely many ideals. \\
(iv) $R/J(R)$ is Artinian (i.e., there are no infinite descending chains of ideals).  \\
b) A semiprimitive ring with finitely many maximal ideals has finitely many ideals.
\end{thm}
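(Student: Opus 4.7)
The plan is to reduce to the semiprimitive setting. Since every maximal ideal of $R$ contains $J(R)$, replacing $R$ by $R/J(R)$ preserves the set of maximal ideals, while conditions (ii), (iii), (iv) are intrinsic to $R/J(R)$ and thus unchanged (note $J(R/J(R)) = (0)$). So I may assume $R$ is itself semiprimitive and prove the four conditions equivalent in this setting. Part b) is then immediate: a semiprimitive $R$ satisfying (i) also satisfies (iii), hence has only finitely many ideals.

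I would prove the cycle (i) $\Rightarrow$ (ii) $\Rightarrow$ (iii) $\Rightarrow$ (iv) $\Rightarrow$ (i). For (i) $\Rightarrow$ (ii): the finitely many maximal ideals $\mm_1, \ldots, \mm_n$ are pairwise comaximal with $\bigcap_i \mm_i = J(R) = (0)$, so the Chinese Remainder Theorem yields $R \cong \prod_{i=1}^n R/\mm_i$, a finite product of fields. The implication (ii) $\Rightarrow$ (iii) is a direct count (a product of $n$ fields has exactly $2^n$ ideals, one per subset of the index set), and (iii) $\Rightarrow$ (iv) is trivial, since finitely many ideals preclude any infinite descending chain.

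The substantive step is (iv) $\Rightarrow$ (i), and this is the only spot where a real argument is needed. I would argue by contraposition: given an infinite sequence of distinct maximal ideals $\mm_1, \mm_2, \ldots$, form the descending chain of intersections $I_n = \bigcap_{i=1}^n \mm_i$. The key claim is that each inclusion $I_n \supset I_{n+1}$ is strict. Indeed, if $I_{n+1} = I_n$ then $\mm_{n+1} \supset I_n \supset \mm_1 \mm_2 \cdots \mm_n$, and primality of $\mm_{n+1}$ forces $\mm_{n+1} \supset \mm_j$ for some $j \leq n$, which upgrades to $\mm_{n+1} = \mm_j$ by maximality, contradicting distinctness. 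The resulting strictly descending chain $I_1 \supsetneq I_2 \supsetneq \cdots$ violates (iv), completing the proof. The mild obstacle here is recognizing that the descending chain built from intersections of maximal ideals is automatically strict precisely because each maximal ideal is prime; once that is in hand, the rest is routine.
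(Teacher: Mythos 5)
Your proof is correct and follows essentially the same route as the paper: the cycle (i) $\Rightarrow$ (ii) via the Chinese Remainder Theorem, (ii) $\Rightarrow$ (iii) $\Rightarrow$ (iv) trivially, and (iv) $\Rightarrow$ (i) by showing an Artinian ring has only finitely many maximal ideals. The only difference is that the paper cites that last fact from a reference, whereas you prove it inline via the strictly descending chain $\bigcap_{i=1}^n \mm_i$, using primality to force strictness --- a correct and self-contained rendering of the same step.
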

\begin{proof}
a) (i) $\implies$ (ii): If the maximal ideals of $R$ are $\mm_1,\ldots,\mm_n$, then by the Chinese Remainder Theorem \cite[Thm. 4.18]{Clark-CA} we have
\[ R/J(R) = R/\bigcap_{i=1}^n \mm_i \cong \prod_{i=1}^n R/\mm_i. \]
(ii) $\implies$ (iii) $\implies$ (iv) immediately.
(iv) $\implies$ (i): Maximal ideals of $R/J(R)$ correspond bijectively to maximal ideals of $R$.  And an Artinian ring has only finitely many maximal ideals \cite[Thm. 8.31]{Clark-CA}. b) This follows from part a).
\end{proof}

\section{But What About Primes?}
\noindent
Our take on Euclid's argument has been as a criterion for the existence of \emph{irreducibles}.  The distinction evaporates in a UFD.  A PID with only finitely many prime ideals is a UFD with only finitely many principal prime ideals.  It turns out that the converse is also true.\footnote{Theorem \ref{5.2} is known to the experts: see e.g. \cite{Zafrullah08}.}

\begin{thm}
\label{5.2}
Let $R$ be a UFD, not a field, with only finitely many atoms.  Then $R$ is a PID with finitely many prime ideals and $\# R = \# R^{\times}$.
\end{thm}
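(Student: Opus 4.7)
The plan is to deduce Theorem \ref{5.2} as a short corollary of material already in hand. Since $R$ is a UFD it is by definition atomic, so Cohen-Kaplansky (Theorem \ref{3.14}) applies immediately and yields three facts at once: $R$ has only finitely many prime ideals, $R$ is Noetherian, and every nonzero prime ideal of $R$ is maximal. This already proves the clause that $R$ has finitely many prime ideals (and in particular finitely many maximal ideals), so the Golomb-style statement $\# R = \# R^{\times}$ will follow simply by invoking Theorem \ref{GOLOMB}.

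The substantive remaining point is to upgrade ``$R$ is a UFD with finitely many maximal ideals'' to ``$R$ is a PID.'' For this I combine the Cohen-Kaplansky conclusions with the standard fact that every UFD is integrally closed in its fraction field (the usual ``rational root'' calculation: a monic relation on $a/b$ in lowest terms forces $b$ to divide $a^n$, whence $b$ is a unit). Thus $R$ is Noetherian, integrally closed, and of dimension at most one, which is exactly the definition of a Dedekind domain. I then quote the structure result already cited in the paper that a Dedekind domain with only finitely many maximal ideals is a PID \cite[Thm.~20.6]{Clark-CA}.

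Putting the pieces in order: (i) apply Theorem \ref{3.14} to the atomic domain $R$ to get the Noetherian, one-dimensional, finitely-many-primes conclusions; (ii) note $R$ is integrally closed since it is a UFD; (iii) conclude $R$ is Dedekind with finitely many maximal ideals, hence a PID; (iv) since $R$ is not a field and has only finitely many maximal ideals, apply Theorem \ref{GOLOMB} to obtain $\# R = \# R^{\times}$. There is no real obstacle here; if anything the delicate point is purely bookkeeping, namely that we must pair Cohen-Kaplansky with integral closure to cross from UFD to Dedekind, rather than trying to prove directly that in a UFD with finitely many atoms every ideal is principal (which, while true and provable by $f_1^{e_1}\cdots f_n^{e_n}$-style generators, requires knowing the $(f_i)$ are maximal -- itself a consequence of Cohen-Kaplansky).
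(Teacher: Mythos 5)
Your proposal is correct, and the first half (Cohen--Kaplansky plus Theorem \ref{GOLOMB} to get finitely many primes and $\# R = \# R^{\times}$) is exactly what the paper does. Where you genuinely diverge is the passage from ``UFD with finitely many maximal ideals'' to ``PID.'' The paper stays inside the prime-ideal-principle circle of ideas: by the proof of Theorem \ref{3.14}a), every nonzero prime ideal $\pp$ contains an irreducible, which in a UFD is a prime element $p$; since Theorem \ref{3.14}c) makes $(p)$ maximal, $\pp = (p)$, so every prime ideal is principal and Kaplansky's criterion \cite[Thm.~4.25]{Clark-CA} (another instance of Lam--Reyes) gives PID directly. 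You instead route through the Dedekind characterization: UFD $\Rightarrow$ integrally closed, plus the Noetherian and one-dimensionality conclusions of Cohen--Kaplansky, gives Dedekind, and then the semilocal Dedekind $\Rightarrow$ PID structure theorem \cite[Thm.~20.6]{Clark-CA} finishes. Both are sound. The paper's version is more self-contained relative to its own development -- it reuses the internal mechanics of the Cohen--Kaplansky proof and needs no facts about integral closure -- and it showcases the prime ideal principle the author is deliberately threading through the article. Your version imports one standard external fact (UFDs are integrally closed, which the paper never states, though your rational-root sketch is fine) in exchange for plugging into the Dedekind machinery already cited elsewhere in the paper; it is arguably the more ``structural'' argument, at the cost of invoking a heavier classification theorem for what is ultimately an elementary statement.
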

\begin{proof}
A UFD with finitely many nonassociate prime elements is a Cohen-Kaplansky domain, so $\MaxSpec R$ is finite and $\# R = \# R^{\times}$ by Theorem \ref{GOLOMB}.  By Theorem \ref{3.14} every nonzero prime ideal of $R$ is maximal.  The proof of Theorem \ref{3.14}a) shows: every nonzero prime ideal $\pp$ contains a prime element $p$.  Since $(p)$ is maximal, we have $\pp = (p)$. Thus every prime ideal is principal, so $R$ is a PID \cite[Thm. 4.25]{Clark-CA}.  (This is another case of the Lam-Reyes Prime Ideal Principle.) \end{proof}
\noindent
  Let us now move away from UFDs.  From Example \ref{4.15}, we deduce:

\begin{thm}
Let $\kappa \geq \aleph_0$ be a cardinal.  There is a Noetherian domain $R$ with exactly one nonzero prime ideal, exactly $\kappa$ irreducibles and no prime elements.
\end{thm}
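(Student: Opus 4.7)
My plan is to apply the construction of Example \ref{4.15} with the parameter field chosen to realize the prescribed cardinality. For any infinite cardinal $\kappa$ there is a field $k$ of cardinality $\kappa$ (for instance, the purely transcendental extension $\F_p(x_i : i \in I)$ of a prime field for an index set $I$ with $|I| = \kappa$). Set $R = k[[t^2,t^3]]$. By Example \ref{4.15}, $R$ is a Noetherian local domain with maximal ideal $\mm = (t^2,t^3)$, and has exactly $2 \#k$ atoms; since $\kappa$ is infinite this equals $\kappa$, giving the required count of nonassociate irreducibles.

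Next I would verify that $\mm$ is in fact the only nonzero prime ideal of $R$. Since $t$ is a root of the monic polynomial $X^2 - t^2 \in R[X]$, the DVR $k[[t]]$ is integral (and indeed module-finite) over $R$, so $\dim R = \dim k[[t]] = 1$. A one-dimensional Noetherian local domain has a unique nonzero prime ideal, which must be $\mm$.

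Finally, I would rule out prime elements. Any prime element would generate a nonzero principal prime ideal, which by the previous step must equal $\mm$, so it suffices to show $\mm$ is not principal. Let $v$ be the $t$-adic valuation on $k[[t]]$. Because $R = k + t^2 k[[t]]$, valuations of nonzero elements of $R$ lie in $\{0\} \cup \{n \in \Z : n \geq 2\}$ — no element of $R$ has a nonzero $t^1$ term. If $\mm = (f)$, then $f \in \mm$ gives $v(f) \geq 2$, while $t^2 = af$ with $a \in R$ forces $v(a) = 2 - v(f) \geq 0$, so $v(f) = 2$. But then $t^3 = bf$ for some $b \in R$ requires $v(b) = 1$, contradicting the description of $v(R^{\bullet})$. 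Hence $\mm$ is not principal, and $R$ has no prime elements.

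The only nontrivial step is the non-principality of $\mm$, handled by the valuation argument above; the counts of atoms and of nonzero primes come directly from Example \ref{4.15} together with the integral extension $R \subset k[[t]]$. Here ``irreducibles'' is interpreted in the atom sense (equivalence classes under association) used throughout the paper, which matches the count $2\#k$ from Example \ref{4.15}.
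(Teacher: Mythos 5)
Your proof is correct and follows the same route as the paper: take $R = k[[t^2,t^3]]$ for a field $k$ of cardinality $\kappa$ and invoke Example \ref{4.15} for the Noetherianity, the local structure, and the count $2\#k = \kappa$ of atoms. The paper simply asserts that $\mm$ is the unique nonzero prime and is not principal, whereas you supply the (correct) justifications via integrality of $k[[t]]$ over $R$ and the valuation argument; this is a welcome filling-in of detail rather than a different approach.
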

\begin{proof}
Let $k$ be a field of cardinality $\kappa$, e.g. $k = \Q(\{t_{\alpha} \mid \alpha \in \kappa\})$. By Example \ref{4.15}, $R = k[[t^2,t^3]]$ is a Noetherian domain with one nonzero prime ideal $\mm = ( t^2,t^3 )$ and $2\kappa = \kappa$ irreducibles.  Since $\mm$ is not principal, $R$ has no prime elements.
\end{proof}
\noindent
Cohen-Kaplansky showed that an atomic domain that is neither a field nor a UFD must have at least $3$ atoms \cite[p. 469]{Cohen-Kaplansky46}.  Their argument is a nice one: we must have at least one nonprime irreducible $f_1$.  Since $(f_1)$ is not prime, it is properly contained in some prime ideal $\pp$, which must therefore contain a nonassociate irreducible $f_2$.  Since $f_1 +f_2 \in \pp$, $f_1+f_2$ is not a unit and therefore it is divisible by an irreducible $f_3$, which cannot be associate to either $f_1$ or $f_2$.
\\ \\
\noindent
Finally, we consider Dedekind domains.

\begin{ques}
\label{CLABORNQUES}
Let $R$ be a Dedekind domain with infinitely many prime ideals.  Must $R$ have infinitely many atoms?
\end{ques}
\noindent
In an important classical case the answer is \textbf{yes}, as most number theorists know.

\begin{thm}
\label{5.1}
For each number field $K$, the ring of integers $\Z_K$ has infinitely many nonassociate prime elements.
\end{thm}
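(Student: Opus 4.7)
The plan is to reduce the problem to producing infinitely many principal prime ideals of $\Z_K$: a prime element is exactly a generator of such an ideal, and two prime elements generate the same ideal if and only if they are associate. Note that the Euclidean Criterion already applies to $\Z_K$, since $\Z_K$ is a semiprimitive Dedekind domain (by Theorem \ref{EUCLIDJACOBSON}), yielding infinitely many pairwise comaximal \emph{irreducibles}. But since the class number $h$ of $\Z_K$ may exceed $1$, these irreducibles are typically not prime elements, and a further idea is needed.

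To produce principal primes, I would invoke the Hilbert class field $H$ of $K$, i.e., the maximal unramified abelian extension. Class field theory provides a canonical isomorphism $\Gal(H/K) \cong \Cl(\Z_K)$ under which the Artin symbol at an unramified prime $\pp$ of $\Z_K$ corresponds to the ideal class $[\pp]$. Hence $\pp$ splits completely in $H$ if and only if $[\pp]$ is trivial, if and only if $\pp$ is principal. The Chebotarev density theorem applied to $H/K$ shows that the set of primes of $K$ splitting completely in $H$ has density $1/h > 0$, and in particular is infinite. Each such $\pp$ has the form $(\pi)$ for a prime element $\pi \in \Z_K$, and distinct principal prime ideals yield non-associate generators, completing the argument.

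The main obstacle is conceptual rather than computational: the Euclidean Criterion produces irreducibles but cannot on its own distinguish prime elements in a non-UFD, and when $h > 1$ most irreducibles of $\Z_K$ are indeed not prime. Bridging this gap requires a genuine density-type statement about prime ideals in the principal class of $\Cl(\Z_K)$, which is what class field theory delivers via Chebotarev. A more self-contained alternative would use Landau's analytic continuation of ideal-class $L$-functions (or, for the trivial class alone, the analytic class number formula), but this is no less deep than the class field theoretic route and seems genuinely beyond the elementary toolkit developed earlier in the paper.
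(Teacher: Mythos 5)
Your argument is essentially the paper's: both reduce the problem to producing infinitely many principal prime ideals of $\Z_K$, and both use the class field theoretic characterization that a prime $\pp$ of $K$ is principal if and only if it splits completely in the Hilbert class field. The one substantive difference is how the infinitude of completely split primes is obtained. You invoke the full Chebotarev density theorem for $H/K$ to get a positive density $1/h$ of principal primes; this is correct but imports the analytic machinery wholesale. The paper instead observes that the special case actually needed --- infinitely many rational primes split completely in a fixed number field (applied to $L = K^1$, and then every prime of $K$ above such a rational prime is principal) --- admits an elementary ``Euclidean'' proof: it reduces to showing that for a nonconstant $f \in \Z[t]$ the set of primes dividing some value $f(n)$ is infinite, which is handled by a Euclid-style substitution argument. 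So the paper's Step 1 stays within the elementary spirit of the article, while your route (Chebotarev, or the Landau $L$-function alternative you mention) does not. Your closing assessment that the required density input is ``genuinely beyond the elementary toolkit'' is therefore slightly too pessimistic: the trivial Frobenius class is exactly the case in which Chebotarev can be avoided, and that is the point the paper is making by citing Poonen's elementary argument. No gap in your proof --- it is complete as written --- but it proves more than is needed at a higher cost.
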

\begin{proof}
Step 1: For any number field $L$, the number of rational primes that split completely in $L$ is infinite.  This is a special case of the Chebotarev Density Theorem, which however can be proved in a more elementary way, as was shown in \cite{Poonen10}.  Using some basic algebraic number theory which we omit here, it comes down to showing that for every nonconstant polynomial $f \in \Z[t]$, the set of prime numbers $p$ dividing $f(n)$ for some $n \in \Z$ is infinite.  If $f(0) = 0$ this is trivial.  If $f(0) \neq 0$, let $p_1,\ldots,p_k$ be the prime divisors of $f(0)$ (we allow $k = 0$) and let $q_1,\ldots,q_{\ell}$ be any finite set of primes not dividing $f(0)$.  For $1 \leq i \leq k$, let $a_i$ be such that $p_i^{a_i} \mid f(0)$ and $p_i^{a_i+1} \nmid f(0)$.  For $N \in \Z^+$ consider \[x_N = f(N p_1^{a_1+1} \cdots p_k^{a_k+1} q_1 \cdots q_{\ell}). \]  Then for all $1 \leq i \leq k$, $p_i^{a_i+1} \nmid x_N$ and for all $1 \leq j \leq \ell$, $q_j \nmid x_N$, so the set of $N$ for which $x_N$ is not divisible by some prime other than $p_1,\ldots,p_k,q_1,\ldots,q_{\ell}$ is finite. \\
Step 2: A prime ideal $\pp$ of a number field is principal iff it splits completely in the Hilbert class field $K^1$ of $K$.  So every prime ideal $\pp$ of $K$ lying above any one of the infinitely many prime numbers $p$ that split completely in $K^1$ is principal.
\end{proof}
\noindent
Looking at the above argument, one wonders: were we working working too hard?   Perhaps some simple argument gives a general affirmative answer to Question \ref{CLABORNQUES}.
\\ \indent
In fact Question \ref{CLABORNQUES} was answered negatively by Claborn \cite[Example 1.5]{Claborn65}.  The construction is impressively direct: start with a Dedekind domain $A$ that is not a PID, let $\mathcal{P}$ be the set of prime elements of $R$ and pass to $R = A[\{\frac{1}{p}\}_{p \in \mathcal{P}}]$.  The prime ideals of $R$ are precisely the nonprincipal prime ideals of $A$, which remain nonprincipal in $R$!   This \textbf{prime-killing construction} also appears in a work of Samuel \cite[p. 17, Thm. 6.3]{Samuel64} and is therein attributed to Nagata (cf. \cite[Lemma 2]{Nagata57}).   For a Dedekind domain $A$, write $\operatorname{Cl} A$ for its ideal class group: the quotient of the monoid of nonzero ideals of $A$ under the equivalence relation $I \sim J$ iff there are $\alpha,\beta \in A^{\bullet}$ with $(\alpha)I = (\beta)J$.  In the setting of the prime-killing construction -- i.e., $R$ is the localization of $A$ at the multiplicative subset generated by the prime elements -- we have \cite{Samuel64}, \cite{Claborn65} that $\operatorname{Cl} R \cong \operatorname{Cl} A$.  

\begin{thm}
Let $\kappa$ be an infinite cardinal.  There is a Dedekind domain $R$ with exactly $\kappa$ atoms and no prime elements.
\end{thm}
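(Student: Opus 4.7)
The plan is to construct $R$ directly as the affine coordinate ring of an elliptic curve minus its origin over an algebraically closed field of cardinality $\kappa$, bypassing the Claborn prime-killing construction entirely: the group law on the curve automatically forces every nonzero prime ideal to be non-principal. Concretely, let $k$ be an algebraically closed field of cardinality $\kappa$ with characteristic neither $2$ nor $3$ (for $\kappa = \aleph_0$, take $k = \overline{\Q}$; for $\kappa > \aleph_0$, take the algebraic closure of $\Q(t_\alpha : \alpha < \kappa)$), let $E$ be the elliptic curve $y^2 = x^3 + 1$ over $k$ with point at infinity $O$, and set $R = k[x,y]/(y^2 - x^3 - 1)$, the affine coordinate ring of $E \setminus \{O\}$. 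Since $E$ is smooth and projective, $R$ is a Dedekind domain.

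Standard algebraic geometry yields $\Cl(R) = \Pic(E \setminus \{O\}) \cong \Pic(E)/\langle [O]\rangle \cong \Pic^0(E) \cong E(k)$, where the last isomorphism is the Abel--Jacobi map $P \mapsto [P-O]$. Under this identification, the class of the maximal ideal $\mm_P$ attached to $P \in E(k) \setminus \{O\}$ is $P$ itself. In particular $\MaxSpec R$ is in bijection with $E(k) \setminus \{O\}$, a set of cardinality $\kappa$, and $\mm_P$ is principal iff $P = O$; since no nonzero prime of $R$ corresponds to $O$, $R$ has no prime elements.

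It remains to count atoms. By unique factorization of ideals in a Dedekind domain, every principal ideal $(f) \subsetneq R$ has a unique expression $(f) = \mm_{P_1}^{e_1} \cdots \mm_{P_n}^{e_n}$ with $e_1 P_1 + \cdots + e_n P_n = O$ in $E(k)$, and $(f)$ is an atom iff no proper nonempty subsum also vanishes. The set of finite multi-subsets of the $\kappa$-many primes has cardinality $\kappa^{<\omega} = \kappa$, giving the upper bound. For the lower bound, for each unordered pair $\{P,-P\}$ with $P \in E(k) \setminus E[2](k)$ the ideal $\mm_P \mm_{-P}$ is principal (as $P+(-P)=O$) and admits no proper principal factor (neither $\mm_P$ nor $\mm_{-P}$ alone is principal), hence is an atom; the set of such pairs has cardinality $\kappa$ as well. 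Therefore $R$ has exactly $\kappa$ atoms.

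The sole non-elementary ingredient is the Abel--Jacobi identification $\Cl(R) \cong E(k)$; everything else is a cardinal count. An alternative route would apply Claborn's prime-killing construction to some Dedekind domain of class number two with $\kappa$ prime ideals, but producing such a base ring for arbitrary infinite $\kappa$ amounts to essentially the same obstacle in different clothing.
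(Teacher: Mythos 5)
Your proposal is correct and is essentially the paper's own construction: an elliptic Dedekind domain $k[x,y]/(y^2-x^3-\cdots)$ over an algebraically closed field of cardinality $\kappa$, with the Riemann--Roch/Abel--Jacobi identification of the class group with $E(k)$ forcing every nonzero prime ideal to be non-principal. The only divergence is the atom count: the paper squeezes $\kappa \le \lambda^2 \le \kappa$ using that every ideal of a Dedekind domain is generated by two elements, whereas you exhibit the explicit family of atoms $\mm_P\mm_{-P}$ for the lower bound and count finite products of primes for the upper bound --- both arguments work.
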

\begin{proof}
We will use some properties of ``elliptic Dedekind domains'': for more details, see \cite[$\S$2.4]{Clark09}.
 Let $k$ be an algebraically closed field of characteristic $0$ and cardinality $\kappa$, and put $R = k[x,y]/(y^2-x^3-x)$.  Then $R$ is a Dedekind domain, and by the Nullstellensatz the nonzero prime ideals of $R$ are all of the form $\pp_{(x_0,y_0)} = ( x-x_0,y-y_0, y^2-x^3-x )$ for pairs $(x_0,y_0) \in k^2$ such that $y_0^2 = x_0^3 + x_0$.  In other words, they are the $k$-rational points on the projective elliptic curve $E: y^2z = x^3 + xz^2$, excluding the point at infinity $O = [0:1:0]$.  Moreover, by the Riemann-Roch Theorem, since $[x_0:y_0:1] \neq O$, the prime ideal $\pp_{(x_0,y_0)}$ is not principal.  Thus $R$ is a Dedekind domain with $\# \MaxSpec R = \# R = \kappa$ and without prime elements.
Because $R$ is Dedekind, every ideal can be generated by two elements \cite[Thm. 20.12]{Clark-CA}.  This, together with the fact that Dedekind domains are atomic domains, implies that for all $\pp \in \MaxSpec R$ there are irreducibles $p_{\pp},q_{\pp}$ such that $\pp = ( p_{\pp}, q_{\pp} )$.  Thus if $\lambda$ is the number of irreducibles of $R$ we have
\[ \kappa = \# \MaxSpec R  \leq \lambda^2 \leq (\# R)^2 = \kappa^2 = \kappa, \]
so $\lambda^2 = \kappa$.  Since $\kappa$ is infinite, so is $\lambda$ and thus $\lambda = \lambda^2 = \kappa$.
\end{proof}

\end{document}